\newtheorem{thm}{Theorem}[section]
\newtheorem{prop}[thm]{Proposition}
\newtheorem{lem}[thm]{Lemma}
\newtheorem{rem}[thm]{Remark}
\newtheorem{cor}[thm]{Corollary}
\newtheorem{ex}[thm]{Example}
\newtheorem{defn}[thm]{Definition} 
\renewcommand{\arraystretch}{0.0 }
\begin{document}

\begin{center}
{\Large \textbf{Integer Representations of the Generalized Symmetric Groups}} \vspace*{0.5cm}
\end{center}

\vspace*{0.3cm}
\begin{center}
HASAN ARSLAN$^{*,1}$, ALNOUR ALTOUM$^{*,2}$,  MARIAM ZAAROUR$^{*,3}$ \\
$^{*}${\small {\textit{Department of  Mathematics, Faculty of Science, Erciyes University, 38039, Kayseri, Turkey}}}\\
{\small {\textit{ $^{1}$hasanarslan@erciyes.edu.tr}}}~~{\small {\textit{$^{2}$ alnouraltoum178@gmail.com}}}\\
{\small {\textit{$^{3}$mariamzaarour94@gmail.com }}}\\[0pt]
\end{center}

\begin{abstract}
In this paper, we construct a mixed-base number system over the generalized symmetric group $G(m,1,n)$, which is a complex reflection group with a root system of type $B_n^{(m)}$. We also establish one-to-one correspondence between all positive integers in the set $\{1,\cdots,m^nn!\}$ and the elements of $G(m,1,n)$ by constructing the subexceedant function in relation to this group. In addition, we provide a new enumeration system for $G(m,1,n)$ by defining the inversion statistic on $G(m,1,n)$. Finally, we prove that the \textit{flag-major index} is equi-distributed with this inversion statistic on $G(m,1,n)$. Therefore, the flag-major index is Mahonian on $G(m,1,n)$ with respect to the length function $L$.
\end{abstract}

\textbf{Keywords}: Weyl group, Complex reflection group, Subexceedant function, Integer representation, Root system.\\

\textbf{2020 Mathematics Subject Classification}: 20F55, 20C30. 
\\

\section{Introduction} 
The main purpose of this paper is to construct the integer representations of the generalized symmetric group $G(m,1,n)$. It is well-known that if $m=1$, then $G(1, 1, n)$ is the symmetric group $S_n$ which is a Weyl group of type $A_{n-1}$; for m=2, $G(2, 1, n)$ is the hyperoctahedral group which is a Weyl group of type $B_n$; and $G(2, 2, n)$ is a Weyl group of type $D_n$ (see \cite{br9}). In particular, the integer representations of the classical Weyl groups of types $A_{n-1},~B_n,~D_n$ were studied in \cite{br6},\cite{br8} and \cite{br1}, respectively. Throughout this paper, $\varepsilon$ denotes the $m$-th root of unity. For a fixed $m>1$, $G(m,1,n)$ denotes the generalized symmetric group which consists of all permutations $\pi$ acting on the set $I_n^{m}=\{\varepsilon^{k}i | i=1, \cdots, n; k=1, \cdots, m\}$. In other words, it consists of all bijections $\pi$ of the set $I_n^{m}$ onto itself providing that $\pi(\varepsilon^{k}i )=\varepsilon^{k}\pi(i)$ for all $i=1, \cdots, n; ~k=1, \cdots,m$. Given $x, y \in \mathbb{Z}$, $x \leq y$, we set $[x,y]:=\{x, x+1, \cdots, y\}$. Let $S_n$ be the symmetric group on $[1,n]$ and $C_m$ be the cyclic group of order $m$. Then $G(m,1,n)$ is a split group extension of $C_{m}^n$ by $S_n$, where $C_{m}^n$ is the direct product of $n$ copies of $C_m$ . The representation of $G(m,1,n)$ is given by the following form (see \cite{br4}):
\begin{align*}
    G(m,1,n)=&\langle s_1,\cdots, s_{n-1}, t_1,\cdots, t_{n} : s_i^2=(s_is_{i+1})^3=(s_is_j)^2=e, |i-j|>1;\\ & t_{i}^m=e, t_it_j=t_jt_i, s_it_is_i=t_{i+1}, s_it_j=t_js_i,~j\neq i,i+1 \rangle.
\end{align*}
The group $G(m,1,n)$ has a Cohen (Dynkin) diagram with respect to the set of generators $S=\{t_1,s_1,\cdots,s_{n-1}\}$ as follows:
\[
\begin{tikzpicture}[
roundednode/.style={circle, draw=white!60, fill=white!5, very thick, minimum size=6mm},
roundnode/.style={circle, draw=black!50, fill=white!5, very thick, minimum size=6mm},
squarednode/.style={circle, draw=black!60, fill=white!5, very thick, text width=0.005mm, minimum size=6mm},
]
\node[roundnode, font=\tiny]      (maintopic){m};
\node[roundednode]  [left=of maintopic]{$B_n^{(m)}$ :};
\node[roundnode]        (uppercircle)       [right=of maintopic]{};
\node[roundnode]      (rightsquare)       [right=of uppercircle]{};
\node[roundnode]        (lowercircle)       [right=of rightsquare]{};
\node[roundnode]        (zcircle)       [right=of lowercircle] {};
\node[node distance= 0.3cm, below = of maintopic]{$t_1$};
\node [label=$\frac{-1}{\sqrt{2}}$] (C) at (0.8,0) {};
\node[node distance= 0.3cm, below = of uppercircle]{$s_1$};
 \node[node distance= 0.3cm, below = of rightsquare]{$s_2$};
 \node[node distance= 0.3cm, below = of lowercircle]{$s_{n-2}$};
 \node[node distance= 0.3cm, below = of zcircle]{$s_{n-1}$};
\draw [-,double  distance=2pt,thick](maintopic) -- (uppercircle);
\draw[-] (uppercircle) -- (rightsquare);
\draw[densely dashed,-] (rightsquare)-- (lowercircle);
\draw[-] (lowercircle)-- (zcircle);
\draw[-](lowercircle)-- (zcircle);
\end{tikzpicture}\\
\]
The group $S_n$ is generated by $\{s_1,\cdots, s_{n-1}\}$, where $s_i~~(i=1,\cdots,n-1)$ is described with the transposition $(i,i+1)$. Here, $t_i~~(i=1,\cdots,n)$ may be defined as the permutation \[
  t_i = \bigl(\begin{smallmatrix}
    1 & 2 &  \cdots &i-1&~~i&~~i+1 \cdots & n \\
    1 & 2 & \cdots &i-1&~~ \varepsilon i&~~i+1 \cdots  &  n
  \end{smallmatrix}\bigr).
\]

From the above relations, we obtain that $\tau t_i \tau^{-1}=t_{\tau(i)}$ for any $\tau \in S_n$ and $i=1,\cdots, n$. Hence, we conclude for all $i=1, \cdots, n-1$ that $t_{i+1}=s_it_is_i$ is a reduced expression and the length of each $t_j~~(j=1,\cdots,n)$ equals to $2j-1$. In our work, we assume that the length function $\textit{l}$ on $G(m,1,n)$ is the function $G(m,1,n) \rightarrow \mathbb{N}_{0}$ associated with the set of generators $S$ in the sense of \cite{br2}. It is well-known that the cardinality of $G(m,1,n)$ is $m^nn!$. Therefore, any element $\pi$ of the group $G(m, 1, n)$ is written as follows (see \cite{br3}):
\[
  \pi = \bigl(\begin{smallmatrix}
    1 & 2 &  \cdots  & n \\
    \varepsilon^{r_1}\beta_1 & \varepsilon^{r_2}\beta_2 & \cdots   &  \varepsilon^{r_n}\beta_n
  \end{smallmatrix}\bigr)=\beta \prod_{k=1}^n t_{k}^{r_k}\in G(m,1,n),
\]
where $0\leq r_k \leq m-1$, ~$\beta = \bigl(\begin{smallmatrix}
    1 & 2 &  \cdots  & n \\
    \beta_1 & \beta_2 & \cdots   &  \beta_n
  \end{smallmatrix}\bigr) \in S_n$, and $ \beta_k= \beta(k)$ for all $k=1, \cdots,n$.
Let 
\[
  \sigma = \bigl(\begin{smallmatrix}
    1 & 2 &  \cdots  & n \\
    \varepsilon^{v_1}\gamma_1 & \varepsilon^{v_2}\gamma_2 & \cdots   &  \varepsilon^{v_n}\gamma_n
  \end{smallmatrix}\bigr) \in G(m,1,n),
\]
where $\gamma = \bigl(\begin{smallmatrix}
    1 & 2 &  \cdots  & n \\
    \gamma_1 & \gamma_2 & \cdots   &  \gamma_n
  \end{smallmatrix}\bigr) \in S_n$,~~
$0\leq v_i \leq m-1$ for all $i=1, \cdots,n$. Hence, based on \cite{br3} we obtain that 
\begin{align*}
  \pi \sigma &= \bigl(\begin{smallmatrix}
    1 & 2 &  \cdots  & n \\
    \varepsilon^{r_1}\beta_1 & \varepsilon^{r_2}\beta_2 & \cdots   &  \varepsilon^{r_n}\beta_n
  \end{smallmatrix}\bigr) \bigl(\begin{smallmatrix}
    1 & 2 &  \cdots  & n \\
    \varepsilon^{v_1}\gamma_1 & \varepsilon^{v_2}\gamma_2 & \cdots   &  \varepsilon^{v_n}\gamma_n
  \end{smallmatrix}\bigr)\\
  &=\bigl(\begin{smallmatrix}
    1 & 2 &  \cdots  & n \\
    \varepsilon^{v_1+r_{\gamma_1}}\beta_{\gamma_1} & \varepsilon^{v_2+r_{\gamma_2}}\beta_{\gamma_2}& \cdots   &  \varepsilon^{v_n+r_{\gamma_n}}\beta_{\gamma_n}
  \end{smallmatrix}\bigr) .
\end{align*}
As a convention, we assume throughout this paper that the rightmost permutation acts first in the multiplication of permutations.

The longest element $w_0=\prod_{k=1}^n t_{k}^{m-1}$ of the $G(m,1,n)$ can be written in the following form:
$$  w_0 = \bigl(\begin{smallmatrix}
    1 & 2 &  \cdots  & n \\
    \varepsilon^{m-1}~~ 1 ~~& \varepsilon^{m-1}~~ 2 ~~& ~~\cdots   ~~&  \varepsilon^{m-1}~~ n
  \end{smallmatrix}\bigr) \in G(m,1,n).
$$
Thus we may declare from \cite{br2} that the length of any reduced expression in $G(m,1,n)$ can not exceed $n(n+m-2)$, that is the length of $w_0$.

Following \cite{br0} we let, $\sigma_0:=t_1$ and for all $i \in [1,n-1]$,~$\sigma_i:=s_is_{i-1}\cdots s_1 t_1 \in G(m,1,n)$. Thus, the collection $\{\sigma_0, \sigma_1, \cdots, \sigma_{n-1}\}$ is a different set of generators for $G(m,1,n)$ and any $w\in G(m,1,n)$ has a unique expression 
\begin{equation}\label{flag0}
w=\sigma_{n-1}^{k_{n-1}}\cdots \sigma_{2}^{k_{2}}\sigma_{1}^{k_{1}}\sigma_{0}^{k_{0}}
\end{equation}
with $0\leq k_i \leq mi+m-1$ for all $0\leq i \leq n-1$.  Adin and Roichman defined a permutation statistic called as the \textit{flag-major index} for $G(m,1,n)$ in the case where $m\geq 2$ as follows (see \cite{br0}): Let $w\in G(m,1,n)$. Then
\begin{equation}\label{flag1}
fmaj(w)=\sum_{i=0}^{n-1}k_i.
\end{equation}
It is well-known from \cite{br5'} that 
\begin{equation}\label{flag2}
\sum_{w \in G(m,1,n)}q^{fmaj(w)}=\prod_{i=1}^n [im]_q
\end{equation}
where $q$ is an indeterminate and $[im]_q=\frac{1-q^{im}}{1-q}$ for every $i=1,\cdots, n$.

The rest of this paper is organised as follows: In the second section, we construct the $G_{m,n}$-type number system and give its combinatorial descriptions. In section 3, we establish subexceedant function for the generalized symmetric group $G(m,1,n)$ and show that there exists a one-to-one correspondence between any arbitrary positive integer in $\{1,\cdots,m^nn!\}$ and an element of this  group. In section 4, we study the inversion statistic on the generalized symmetric group. In addition, using the inversion notion we propose a new enumeration system for $G(m,1,n)$ and provide a new approach to the proof of Poincar\'e polynomial of $G(m,1,n)$. Finally, we will prove that the flag-major index and the inversion statistic are equi-distrubuted over $G(m,1,n)$ and that the flag-major index is Mahonian on $G(m,1,n)$ when considering the length function $L$ defined with respect to root system.

\section{Construction of $G_{m,n}$-type Number System}

Let $G(m,1,n)$ be a generalized symmetric group where $m$ and $n$ be two fixed positive integers. In this section, we introduce the $G_{m,n}$-type number system and give its some properties. 
To understand the $G_{m,n}$-type number system, we shall prove the next theorem.

\begin{thm}\label{exp}
Let $m$ be any fixed positive integer. Then every positive integer $x$ can always be expressed in the following form:
\begin{equation}\label{def}
  x=\sum_{i=0}^{n-1} d_iG_i
\end{equation}
where $0 \leq d_i \leq m(i+1)-1$ and $G_i=m^{i}i!$ for all the $0 \leq i \leq n-1$. 
\end{thm}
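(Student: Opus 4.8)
The plan is to recognize the proposed expansion as a mixed-radix (generalized factorial) number system whose successive radices are $b_i := m(i+1)$, and to produce the digits $d_i$ by repeated Euclidean division. The single structural fact that makes everything work is the recursion $G_{i+1} = m(i+1)\,G_i = b_i\,G_i$, which follows immediately from $G_i = m^i i!$; consequently $G_i = b_0 b_1 \cdots b_{i-1}$ is exactly the product of the lower radices, the hallmark of a positional system.

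For existence I would argue by the division algorithm. Starting from $q_0 := x$, I would define $q_{i+1} := \lfloor q_i / b_i \rfloor$ and $d_i := q_i - b_i q_{i+1} = q_i \bmod b_i$, so that $0 \le d_i \le b_i - 1 = m(i+1)-1$ holds automatically, which is precisely the required digit bound. Since $b_i \ge 2$ for every $i \ge 1$, the quotients satisfy $q_{i+1} < q_i$ as soon as $q_i \ge 1$, so the sequence reaches $0$ after finitely many steps; let $n$ be the first index with $q_n = 0$. A short finite induction on $k$ then shows $x = \sum_{i=0}^{k-1} d_i G_i + G_k\, q_k$ for every $k$ (the inductive step substitutes $q_k = d_k + b_k q_{k+1}$ and uses $G_{k+1} = b_k G_k$); evaluating at $k = n$ kills the remainder term and yields the claimed expansion.

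I would also record the telescoping identity $\bigl(m(i+1)-1\bigr)G_i = G_{i+1} - G_i$, whence $\sum_{i=0}^{n-1}\bigl(m(i+1)-1\bigr)G_i = G_n - 1$. This shows the digit ranges exactly tile the interval $[0, G_n - 1]$: the number of admissible digit strings of length $n$ is $\prod_{i=0}^{n-1} m(i+1) = m^n n! = G_n$, and these represent precisely the integers $0, 1, \ldots, G_n - 1$. This furnishes a second, counting-based proof — which also delivers uniqueness — for all $x$ with $0 \le x < G_n$, and letting $n$ grow covers every positive integer.

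The step I expect to require the most care is not any single computation but the bookkeeping around the index $n$ and the boundary case $m = 1$: when $m=1$ the first radix is $b_0 = 1$, forcing $d_0 = 0$ and giving no decrease at the initial step, so I would note explicitly that termination is driven by the radices with $i \ge 1$. Beyond that, making the statement precise — that $n$ depends on $x$ and is taken as the least index with $G_n > x$ — is the only genuinely delicate point; the digit bound and the summation identity are then routine.
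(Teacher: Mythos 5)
Your proof is correct. For existence it is essentially the paper's own argument made rigorous: the digits produced by your iterated division $q_{i+1}=\lfloor q_i/b_i\rfloor$, $d_i=q_i \bmod b_i$ with $b_i=m(i+1)$ are exactly those of the paper's division scheme ($x=mq_0+r_0$, then $q_0=2mq_1+r_1$, and so on), and your invariant $x=\sum_{i=0}^{k-1}d_iG_i+G_k q_k$ supplies the finite induction that the paper only sketches informally after the proof. Where you genuinely diverge is uniqueness, which is in fact the only thing the paper's displayed proof addresses: there, Lemma \ref{first} ($x\le G_n-1$) pins down the number of digits, Lemma \ref{second} ($d_{n-1}G_{n-1}\le x<(d_{n-1}+1)G_{n-1}$) pins down the leading digit, and an induction on the number of digits finishes. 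You instead derive uniqueness by counting: your telescoping identity $\sum_{i=0}^{n-1}\bigl(m(i+1)-1\bigr)G_i=G_n-1$ is precisely the content of Lemma \ref{first}, it shows all $m^n n!=G_n$ admissible digit strings land in $[0,G_n-1]$, and since your division argument already gives surjectivity onto that interval, equal cardinalities force a bijection. Both routes are sound; the paper's is more constructive about where two representations must first differ, yours is shorter and exposes the mixed-radix structure $G_{i+1}=b_iG_i$ directly. Two small remarks: your ``second, counting-based proof'' is not self-contained, since the pigeonhole step needs the surjectivity established by your first argument (worth stating explicitly rather than presenting it as an independent proof); and your attention to the $m=1$ boundary case, where $b_0=1$ forces $d_0=0$ and termination must come from the radices with $i\ge 1$, is a detail the paper passes over in silence.
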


We will denote any positive integer $x$  of the form (\ref{def}) by
\[
x=(d_{n-1}:d_{n-2}: \cdots : d_1: d_0).
\]
\begin{defn}
The set of integers $x$, which can be written in the form (\ref{def}) based on a fixed positive integer $m$, is called the $G_{m,n}$\textit{-type number system}.
\end{defn}

To prove Theorem \ref{exp}, we need the following lemmas, which concern with some fundamental properties of the $G_{m,n}$-type number system. In particular, these properties have structures similar to those of the factoriadic number system of type $A_{n-1}$ and the hyperoctahedral base system of type $B_n$.

Two lemmas listed below generalize the results presented in\cite{br8} for hyperoctahedral groups.  

\begin{lem} \label{first}
For any $x=(d_{n-1}:d_{n-2}: \cdots : d_1: d_0)$ in the $G_{m,n}$-type number system, we have
\begin{equation*}
    0 \leq x \leq G_n-1.
\end{equation*}
\end{lem}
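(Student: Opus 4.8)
The plan is to bound $x$ term by term using the constraints $0 \le d_i \le m(i+1)-1$, and then to recognize that the resulting extremal sum telescopes. The lower bound is immediate: since every coefficient satisfies $d_i \ge 0$ and each weight $G_i = m^i i! > 0$, each summand $d_i G_i$ is nonnegative, so $x \ge 0$.

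For the upper bound, the first thing I would record is the multiplicative recurrence satisfied by the weights. Since $G_{i+1} = m^{i+1}(i+1)! = m(i+1)\cdot m^i i! = m(i+1)G_i$, we obtain the key identity
\begin{equation*}
\bigl(m(i+1)-1\bigr)G_i = G_{i+1} - G_i .
\end{equation*}
This factorization is the only genuine computation in the proof, and it is the crux of the whole argument: the upper endpoint $m(i+1)-1$ of the range for $d_i$ is exactly engineered so that the maximal contribution $\bigl(m(i+1)-1\bigr)G_i$ equals the gap $G_{i+1}-G_i$ between consecutive weights.

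Next I would substitute the maximal admissible value $d_i = m(i+1)-1$ into the representation to bound $x$ from above:
\begin{equation*}
x = \sum_{i=0}^{n-1} d_i G_i \;\le\; \sum_{i=0}^{n-1}\bigl(m(i+1)-1\bigr)G_i \;=\; \sum_{i=0}^{n-1}\bigl(G_{i+1}-G_i\bigr).
\end{equation*}
The right-hand side is a telescoping sum, collapsing to $G_n - G_0$. Since $G_0 = m^0\cdot 0! = 1$, this yields $x \le G_n - 1$, as required.

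I do not anticipate any serious obstacle here: the entire lemma reduces to spotting the factorization $\bigl(m(i+1)-1\bigr)G_i = G_{i+1}-G_i$, after which the telescoping collapses the extremal sum exactly to $G_n-1$. The one point worth stating explicitly is sharpness — equality $x = G_n-1$ is attained precisely when every $d_i$ takes its maximal value $m(i+1)-1$ — since this confirms that the range $\{0,1,\dots,G_n-1\}$ is the exact image of the $G_{m,n}$-type representations and thus prepares the uniqueness half needed to establish Theorem~\ref{exp}.
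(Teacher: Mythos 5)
Your proof is correct: the identity $\bigl(m(i+1)-1\bigr)G_i = G_{i+1}-G_i$ together with the telescoping sum is exactly the standard argument, and the sharpness remark is a nice bonus. The paper states this lemma without proof, so there is no authorial argument to compare against, but your route is the natural one (it generalizes the familiar factorial-base bound) and fills the gap cleanly.
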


As a result of Lemma \ref{first}, we conclude that we have exactly $m^{n}n!$ numbers in the $G_{m,n}$-type number system.

\begin{lem}\label{second}
Let $x=(d_{n-1}:d_{n-2}: \cdots : d_1: d_0)$ be a number in $G_{m,n}$-type number system, then we have
\begin{equation*}
    d_{n-1}G_{n-1} \leq x < (d_{n-1}+1)G_{n-1}.
\end{equation*}
\end{lem}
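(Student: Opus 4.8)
The plan is to split off the leading digit $d_{n-1}$ and bound the contribution of the remaining digits separately. Writing
\[
x = \sum_{i=0}^{n-1} d_i G_i = d_{n-1} G_{n-1} + \sum_{i=0}^{n-2} d_i G_i,
\]
the lower inequality $d_{n-1} G_{n-1} \leq x$ is immediate, since every summand $d_i G_i$ is nonnegative and the leading term $d_{n-1} G_{n-1}$ already appears in the sum.

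For the upper bound I would estimate the tail $\sum_{i=0}^{n-2} d_i G_i$ using the maximal admissible digit values $d_i \leq m(i+1)-1$ together with the telescoping identity
\[
(m(i+1)-1)G_i = m^{i+1}(i+1)! - m^i i! = G_{i+1} - G_i.
\]
Summing this from $i=0$ to $i=n-2$ collapses the right-hand side to $G_{n-1} - G_0 = G_{n-1} - 1$, where I use that $G_0 = m^0 \cdot 0! = 1$. Hence $\sum_{i=0}^{n-2} d_i G_i \leq G_{n-1} - 1 < G_{n-1}$, and adding $d_{n-1} G_{n-1}$ to both sides yields $x < (d_{n-1}+1) G_{n-1}$, as required.

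Alternatively, I could observe that $(d_{n-2}: \cdots : d_0)$ is itself a number in the $G_{m,n-1}$-type number system, so applying Lemma~\ref{first} with $n-1$ in place of $n$ delivers the bound $\sum_{i=0}^{n-2} d_i G_i \leq G_{n-1} - 1$ directly. Either route is entirely routine, and I expect no genuine obstacle here; the statement is essentially a refinement of Lemma~\ref{first} obtained by tracking the leading digit. The only point deserving a moment's care is the strictness of the right-hand inequality: it relies on $G_0 = 1 > 0$, which forces the tail sum to fall strictly short of $G_{n-1}$ even when every lower digit is taken as large as possible.
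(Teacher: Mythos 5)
Your argument is correct: the lower bound is immediate from nonnegativity of the digits, and the telescoping identity $(m(i+1)-1)G_i = G_{i+1}-G_i$ gives the tail bound $\sum_{i=0}^{n-2} d_iG_i \leq G_{n-1}-1$, which is exactly what the strict upper inequality needs. The paper states this lemma without proof, and your telescoping computation (equivalently, invoking Lemma~\ref{first} for $n-1$ digits) is precisely the standard argument the authors are implicitly relying on, so there is nothing to compare beyond noting that you have supplied the omitted details correctly.
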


\textit{$\boldsymbol{Proof~~of~~Theorem~~\ref{exp}}$}: \\
Assume that a positive integer $x$ has two representations in the $G_{m,n}$-type number system as follows:
\begin{equation*}
   x=(d_{n-1}:d_{n-2}: \cdots : d_1: d_0)=(e_{s-1}:e_{s-2}: \cdots : e_1: e_0),
\end{equation*}
where $d_{n-1}\neq 0$ and $e_{s-1}\neq 0$. The facts that both $d_{n-1}$ and $e_{s-1}$ are at least $1$ lead to
\begin{equation}\label{cntr}
   G_{n-1} \leq d_{n-1} G_{n-1} \leq x ~~~~
  \textrm{and}~~~~G_{s-1} \leq e_{s-1} G_{s-1} \leq x.
\end{equation}
Now, we suppose that $n \neq s$. Without loss of generality, we can assume that $n<s$. Then by Lemma \ref{first} and the inequality in the right hand side of the equation (\ref{cntr}), we obtain 
\begin{equation*}
    x < G_n \leq G_{s-1} \leq x,
\end{equation*}
which is a contradiction. Thus, we get $n=s$. 

By induction on the number of digits, we will show that $d_i=e_i$ for all $0 \leq i \leq n-1$. From the equation (\ref{def}), the assertion is clear for $x=(d_0)=(e_0)$. We assume that a positive integer $x$ with $k(<n)$ digits in the $G_{m,n}$-type number system has a unique representation. Suppose that $d_{n-1}\neq  e_{n-1}$. Without loss of generality, take $d_{n-1}< e_{n-1}$. Thus, from Lemma \ref{second} we get
\begin{equation*}
    x < (d_{n-1}+1) G_{n-1} \leq e_{n-1} G_{n-1} \leq x,
\end{equation*}
which is a contradiction and hence $d_{n-1}= e_{n-1}$. Since $d_{n-1}= e_{n-1}$ and by the induction hypothesis, the integer $x-d_{n-1}G_{n-1}=x-e_{n-1}G_{n-1}$ has a unique representation and so $d_i=e_i$ for all $0 \leq i \leq n-2$. This completes the proof.

Now, we will explain how to express any positive integer $x$ in terms of the $G_{m,n}$-type number system:\\
The algorithm proceeds in a series of steps. In the first step of the algorithm, $x$ is divided by $m$ and the reminder is set to be $r_0=d_0$ in the division process
\begin{equation*}
    x=m.q_0+r_0.
\end{equation*}
Then divide $q_0$ by $2m$ and the reminder is set to be $r_1=d_1$ in the division process
\begin{equation*}
    q_0=2.m.q_1+r_1.
\end{equation*}
Proceed in this way, i.e., by dividing $q_{i-1}$ by $m(i+1)$ and getting $r_i=d_i$ in the expression
\begin{equation*}
    q_{i-1}=(i+1).m.q_i+r_{i}
\end{equation*}
until the quotient $q_{n-1}=0$ is zero for some integer $n$. Thus, at the final step, we get 
\begin{equation*}
    q_{n-2}=n.m.q_{n-1}+r_{n-1}
\end{equation*}
and assign $r_{n-1}$ to $d_{n-1}$. Eventually, we write the number $x$ as 
\begin{equation}\label{intrep}
   x=(d_{n-1}:d_{n-2}: \cdots : d_1: d_0)
\end{equation}
in $G_{m,n}$-type base system.

\begin{rem}
Have initially selected any fixed positive integer $m$ and then applied the above procedure, we can write any positive integer $x$ in the $G_{m,n}$-type number system as in the equation (\ref{intrep}).
 Here, $n$ is the number of digits that we obtain when we apply the above procedure to the positive integer $x$. Hence, $n$ is revealed at the end of the operations.

\end{rem}

We propose to use the following Python algorithm to convert any positive integer into a number in $G_{m,n}$-type base system:\\
\textbf{Algorithm 1:} \hspace*{\fill} \\
x=\textrm{int(input}('\textrm{Enter}~~\textrm{any}~~\textrm{positive}~~ \textrm{integer:}'))\\
m=\textrm{int(input}('\textrm{Enter}~~\textrm{the}~~\textrm{value of}~~ \textrm{m:}'))\\
\textrm{for}~~\textrm{i}~~\textrm{in}~~\textrm{range}(1,x):\\
$d=x \% (m*i)$\\
if $x > 0:$\\
$x=x//(m*i)$\\
else:\\
break\\
print(d, end=':') \\

Let us give an example of how this algorithm works:  

\begin{ex}
The expression of positive integer $x=199761$ in $G_{7,5}$-type base system is $x=(3:13:1:5:2)$.
\end{ex}

Alternatively, the following Python algorithm can be used to convert any number in the $G_{m,n}$-type number system into a positive integer:\\

\textbf{Algorithm 2:}  \hspace*{\fill} \\
n=\textrm{int(input}('\textrm{enter}~~\textrm{the}~~\textrm{index}~~ \textrm{of}~~\textrm{$G_{m,n}$}~~ \textrm{base}:'))\\
m=\textrm{int(input}('\textrm{enter}~~\textrm{the}~~\textrm{value}~~ \textrm{of}~~\textrm{$m$}:'))\\
f=1\\
x=0\\
\textrm{for}~~\textrm{i}~~\textrm{in}~~\textrm{range}(0,n):\\
d=\textrm{int(input}('\textrm{enter}~~\textrm{a}~~\textrm{number}~~ \textrm{in}~~$G_{m,n}$\textrm{-type}~~ \textrm{number}~~\textrm{system}:'))\\
if $i==0$ or $i==1$:\\
$f=1$\\
else: \\
$f=f*i$\\
$t=m**(i)*f$\\
$z=d*t$\\
$x~ +=z$\\
print('The decimal number is:', x) \\

\begin{ex}
Let $x=(56:238:8:270:218:133:236:210:204:102:63:208:157:94:171:89:19:20:50:67:121:134:75:30:37:58:97:104:58:2:75:31:42:24:43:2:17:3:16:16:0:3)$ be a number in $G_{7,42}$-type number system. It corresponds to the positive integer $847269328185736775326682798778327079883274857780833279866982328472693276\\65908932687971$.
\end{ex}

\section{Integer Representation of the group $G(m,1,n)$}

Mantaci and Rakotondrajao \cite{br7} introduced the subexceedant functions for the symmetric group $S_n$ and showed that there is one-to-one correspondence between permutations in $S_n$ and the subexceedant functions. The subexceedant function is an important tool for constructing integer representations of the classical Weyl groups, (see \cite{br5,br8,br1}).  Depending more on the structure of the group $G(m,1,n)$ and inspiring by\cite{br3}, \cite{br7} and \cite{br8}, we will define the subexceedant function for $G(m,1,n)$.

\begin{defn}[\cite{br7}]
A subexceedant function $f$ on the set $[1,n]$ is a map such that 
\begin{equation*}
    1 \leq f(i) \leq i,~~\textrm{for}~~\textrm{all}~~1 \leq i \leq n.
\end{equation*}
\end{defn}
We denote the set of all subexceedant functions on $[1,n]$ by $\mathcal{F}_n$ and hence $|\mathcal{F}_n|=n!$. The subexceedant function $f$ on $[1,n]$ is, in general, identified with the word $f(1);\cdots; f(n)$. Furthermore, the map 
\begin{equation}\label{subex}
   \psi : \mathcal{F}_n \mapsto S_n,~~\psi(f)=(nf(n))\cdots(2f(2))(1f(1)) 
\end{equation}
is a bijection and $(if(i))$ is a transposition for each $i=1,\cdots, n$ \cite{br7}.

Now, let $\beta = \bigl(\begin{smallmatrix}
    1 & 2 &  \cdots  & n \\
    \beta_1 & \beta_2 & \cdots   &  \beta_n
\end{smallmatrix}\bigr)$ be an element of $S_n$. In \cite{br7}, Mantaci and Rakotondrajao defined the subexceedant function $f$ corresponding to $\beta$ according to the map $\psi$ with the following steps:
 \begin{itemize}
     \item Set $f(n)=\beta_n$.
     \item Then, replace the image of ${\beta}^{-1}(n)$ in the permutation $\beta = \bigl(\begin{smallmatrix}
    1 & 2 &  \cdots  & n \\
    \beta_1 & \beta_2 & \cdots   &  \beta_n
\end{smallmatrix}\bigr)$ by $\beta_n$. Thus, a new permutation $\beta'$ that contains $n$ as a fixed point is obtained. Hence, $\beta'$ can be considered as an element of $S_{n-1}$.
     \item Set $f(n-1)=\beta'_{n-1}$.
     \item Apply the same procedure for the permutation $\beta'$, i.e, exchange the image of ${\beta'}^{-1}(n-1)$ in the permutation $\beta'$ with $\beta'_{n-1}$, and determine in this manner $f(n-2)$.
     \item Continue this iteration until you find all the $f(i)$ values for each $1 \leq i \leq n$.
 \end{itemize}

Now, we are ready to define the subexceedant function for the generalized symmetric group $G(m,1,n)$.

\begin{defn}
Let $x=(d_{n-1}:d_{n-2}: \cdots : d_1: d_0)$ be a number with the $n$-digits in the $G_{m,n}$-type number system. We define the subexceedant function $f$, associated with $x$, on the set $[1,n]$ as follows: 
\begin{equation}\label{subexc}
 f(i)=1+\lfloor{\frac{d_{i-1}}{m}}\rfloor,~~\textrm{for}~~\textrm{all}~~1 \leq i \leq n
\end{equation}
where $\lfloor\cdot \rfloor$ denotes the floor function.
\end{defn}
It is clear that $1 \leq f(i) \leq i$ since $0 \leq d_{i-1} \leq m . i-1$ for all $1 \leq i \leq n$.
Having defined the coefficient $c_i=\varepsilon^{d_{i-1}}$ for all $1 \leq i \leq n$, we associate $x=(d_{n-1}:d_{n-2}: \cdots : d_1: d_0)$ in the $G_{m,n}$-type number system to a unique permutation 
\[
  \pi_x = \bigl(\begin{smallmatrix}
    1 & 2 &  \cdots  & n \\
    c_1\beta_1 & c_2\beta_2 & \cdots   &  c_n \beta_n
  \end{smallmatrix}\bigr) \in G(m,1,n),
\]
where $\beta_f = \bigl(\begin{smallmatrix}
    1 & 2 &  \cdots  & n \\
    \beta_1 & \beta_2 & \cdots   &  \beta_n
  \end{smallmatrix}\bigr) \in S_n$
is the permutation of $S_n$, that is the image $\psi(f)$ of the subexceedant function $f$ under $\psi$ given in the equation (\ref{subex}).

As a result of the above facts, we match each positive integer $x$ expressed in the $G_{m,n}$-type number system with an element of $G(m,1,n)$. On the other hand, we will now show how any element of this group is converted to a positive integer. For this reason, we take any permutation 
$$    \sigma = \bigl(\begin{smallmatrix}
    1 & 2 &  \cdots  & n \\
    \varepsilon^{r_1}\gamma_1 & \varepsilon^{r_2}\gamma_2 & \cdots   &  \varepsilon^{r_n}\gamma_n
  \end{smallmatrix}\bigr) \in G(m,1,n),
$$
where $\gamma = \bigl(\begin{smallmatrix}
    1 & 2 &  \cdots  & n \\
    \gamma_1 & \gamma_2 & \cdots   &  \gamma_n
  \end{smallmatrix}\bigr) \in S_n$,~~
$0\leq r_i \leq m-1$. 
\begin{enumerate}
\item First of all, we determine the subexceedent function $f$ in relation to $\gamma$.  Let $f=\psi^{-1}(\gamma) \in \mathcal{F}_n$.
 \item Set $d_i=m(f(i+1)-1)+r_{i+1},~ \textrm{for}~~\textrm{all}~~0 \leq i \leq n-1$.
 \item Compute $x=(d_{n-1}:d_{n-2}: \cdots : d_1: d_0)$.
\end{enumerate}
Based on the above facts, we will state the following theorem without proof.

\begin{thm}\label{funda}
There is one-to-one correspondence between natural integers of the set $\{1,\cdots,m^n n!\}$ and elements of the generalized symmetric group $G(m,1,n)$.
\end{thm}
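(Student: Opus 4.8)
The plan is to establish the bijection by exhibiting explicitly inverse maps between the two sets and invoking the uniqueness already secured in Theorem \ref{exp}. First I would set up the forward map $\Phi$ sending an integer $x \in \{1,\cdots,m^n n!\}$ to an element of $G(m,1,n)$: by Lemma \ref{first} together with the algorithm following Theorem \ref{exp}, each such $x$ (after the harmless shift replacing $x$ by $x-1$ so that the admissible range $\{1,\cdots,m^n n!\}$ matches the representable integers $0 \leq x \leq G_n - 1$) has a unique $G_{m,n}$-type expansion $x=(d_{n-1}:\cdots:d_0)$ with $0 \leq d_i \leq m(i+1)-1$. The definition in equation (\ref{subexc}) then produces a subexceedant function $f \in \mathcal{F}_n$, and the coefficients $c_i = \varepsilon^{d_{i-1}}$ together with $\beta_f = \psi(f) \in S_n$ assemble into the permutation $\pi_x \in G(m,1,n)$. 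Thus $\Phi(x) = \pi_x$ is well-defined.

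Next I would write down the reverse map $\Psi$ using the three-step recipe given just before the theorem statement: starting from an arbitrary $\sigma = \bigl(\begin{smallmatrix} 1 & \cdots & n \\ \varepsilon^{r_1}\gamma_1 & \cdots & \varepsilon^{r_n}\gamma_n \end{smallmatrix}\bigr)$, one recovers $f = \psi^{-1}(\gamma)$, sets $d_i = m(f(i+1)-1)+r_{i+1}$, and forms the integer $x = \sum_{i=0}^{n-1} d_i G_i$. The heart of the argument is then to verify that $\Phi$ and $\Psi$ are mutually inverse. I would check $\Psi \circ \Phi = \mathrm{id}$ by a direct unwinding: given $x$ with digits $d_i$, the induced $f$ satisfies $f(i+1) = 1 + \lfloor d_i/m \rfloor$ and the $i$-th coefficient records the residue $r_{i+1} = d_i \bmod m$ via $c_{i+1} = \varepsilon^{d_i \bmod m}$, so that the reconstruction $m(f(i+1)-1) + r_{i+1} = m\lfloor d_i/m\rfloor + (d_i \bmod m) = d_i$ returns precisely the original digits. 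The reverse composition $\Phi \circ \Psi = \mathrm{id}$ follows symmetrically, using that $\psi$ is a bijection of $\mathcal{F}_n$ onto $S_n$ (as recalled from \cite{br7}) to guarantee that the $S_n$-part is faithfully recovered.

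The main obstacle I anticipate is not the arithmetic of the digit-to-residue dictionary, which is essentially the Euclidean division identity $d = m\lfloor d/m\rfloor + (d \bmod m)$, but rather confirming that the two constructions genuinely invert each other at the level of the full wreath-product structure rather than separately on the $S_n$-part and the $C_m^n$-part. Concretely, one must make sure that the index bookkeeping linking the digit $d_{i-1}$ to both $f(i)$ and the exponent attached to position $i$ is consistent under $\psi$ and $\psi^{-1}$, since $\psi$ permutes positions via the transposition decomposition $(n\,f(n))\cdots(1\,f(1))$ and this could in principle reshuffle which residue lands on which coordinate. I would handle this by tracking a single coordinate through both maps and appealing to the uniqueness of the expression $\pi = \beta \prod_k t_k^{r_k}$ established at the start of the paper. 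Finally, a counting check closes the argument cleanly: by the remark after Lemma \ref{first} there are exactly $m^n n!$ numbers in the $G_{m,n}$-type system, which equals $|G(m,1,n)|$, so once $\Phi$ is shown to be injective (or well-defined with a one-sided inverse) the pigeonhole principle upgrades it to a bijection, making the verification of only one composition strictly necessary.
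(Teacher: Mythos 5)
Your proposal is correct and follows exactly the route the paper intends: the paper states Theorem \ref{funda} ``without proof,'' relying on the forward construction $x \mapsto \pi_x$ via the subexceedant function and the three-step reverse recipe given immediately before the theorem, which are precisely your maps $\Phi$ and $\Psi$. Your verification that they are mutually inverse via $d = m\lfloor d/m\rfloor + (d \bmod m)$, together with the counting check $|\mathcal{F}_n|\cdot m^n = m^n n! = |G(m,1,n)|$, supplies the details the paper omits without deviating from its approach.
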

Throughout this work, we will denote by $I(w)$ the integer representation of an element $w\in G(m,1,n)$. 

\begin{ex}
The corresponding integer representation of $x=2161$ in the $G_{3,5}$-type number system is $(1:1:3:0:1)$. We obtain the subexceedant function of $x$ based on the equation (\ref{subexc}) as $f=f(1);f(2);f(3);f(4);f(5)=1;1;2;1;1$. Hence, we get $\pi_x= \bigl(\begin{smallmatrix}
    1 &~~ 2 &~~ 3 &~~ 4 &~~ 5  \\
    \varepsilon 3 &~~ 4 &~~2 &~~
    \varepsilon5 &~~
    \varepsilon1
  \end{smallmatrix}\bigr)  \in G(3,1,5)$, where $\gamma = \bigl(\begin{smallmatrix}
    1 & 2 & 3 & 4  & 5 \\
    3 & 4 & 2 & 5 &  1
  \end{smallmatrix}\bigr) \in S_5$.
\end{ex}

\begin{ex}
Let $\sigma = \bigl(\begin{smallmatrix}
    1 & ~~2 &~~ 3 &~~ 4 &~~ 5 &~~ 6 \\
    \varepsilon^{2}2 &~~ \varepsilon^{3}4 &~~ \varepsilon 3 &~~1&~~  \varepsilon^{2}6 &~~  \varepsilon 5
  \end{smallmatrix}\bigr) \in G(4,1,6),$
where $\gamma = \bigl(\begin{smallmatrix}
    1 & 2 & 3 & 4 & 5 & 6 \\
    2 & 4 & 3 & 1 & 6 & 5
  \end{smallmatrix}\bigr) \in S_6.$
We obtain the subexceedant function associated with $\gamma$ as 
$$f=f(1);f(2);f(3);f(4);f(5);f(6)=1;1;3;1;5;5.$$ 
Thus, the corresponding integer representation of $\sigma$ is 
$$I(\sigma)=(13:14:0:7:3:2)$$
and its positive integer value is $406433$.
\end{ex}

Moreover, we deduce that the subexceedant function $f$ associating with the longest element $w_0$ of $G(m,1,n)$ is $f (1); f(2); \cdots ; f (n)=1;2; \cdots ; n$. 

\begin{cor}\label{corollary}
Let $w_0$ be the longest element of the generalized symmetric group $G(m,1,n)$. Then the integer representation of $w_0$ in the $G_{m,n}$-type number system has the following form: 
\begin{align*}
I(w_0)=&(d_{n-1}:d_{n-2}: \cdots : d_2: d_1: d_0)\\
=&(nm-1: (n-1)m-1:\cdots:3m-1:2m-1:m-1).
\end{align*}
Therefore, it is clear that the order of group $G(m,1,n)$ is 
$$|G(m,1,n)|=\prod_{i=0}^{n-1}(d_i+1)=m^{n}n!.$$

\end{cor}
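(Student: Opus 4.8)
\textbf{Proof proposal for Corollary \ref{corollary}.}

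The plan is to derive the corollary as a direct computation from the subexceedant function attached to $w_0$ together with the conversion formula in step (2) of the integer-representation procedure. First I would confirm the claim, already asserted in the text preceding the corollary, that the subexceedant function $f$ associated with $w_0$ is $f(i)=i$ for all $1\leq i\leq n$. This follows by applying Mantaci--Rakotondrajao's inverse construction $\psi^{-1}$ to the underlying permutation $\gamma$ of $w_0$: since
\[
w_0 = \bigl(\begin{smallmatrix}
    1 & 2 & \cdots & n \\
    \varepsilon^{m-1}\,1 & \varepsilon^{m-1}\,2 & \cdots & \varepsilon^{m-1}\,n
  \end{smallmatrix}\bigr),
\]
its $S_n$-part $\gamma$ is the identity permutation, so $\beta_i=i$ for every $i$, and the iterative procedure producing $f$ yields $f(i)=\beta_i=i$ at each stage. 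Thus the first step is essentially bookkeeping: identify $\gamma=\mathrm{id}$ and read off $f(i)=i$.

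Next I would apply the conversion rule $d_i = m\bigl(f(i+1)-1\bigr)+r_{i+1}$ for $0\leq i\leq n-1$. Here the cyclic exponents are all $r_k=m-1$ because every entry of $w_0$ carries the factor $\varepsilon^{m-1}$. Substituting $f(i+1)=i+1$ and $r_{i+1}=m-1$ gives
\[
d_i = m\bigl((i+1)-1\bigr)+(m-1)=mi+m-1=(i+1)m-1,
\]
which is exactly the claimed digit. Writing this out for $i=n-1,n-2,\ldots,1,0$ reproduces the stated tuple $(nm-1:(n-1)m-1:\cdots:2m-1:m-1)$. This is the computational heart of the argument, and it is short.

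Finally, for the order formula I would invoke Theorem \ref{funda}, which establishes the bijection between $\{1,\dots,m^n n!\}$ and $G(m,1,n)$, so that $|G(m,1,n)|$ equals the total count of admissible digit tuples. By the digit bounds $0\leq d_i\leq m(i+1)-1$ of Theorem \ref{exp} (equivalently, the maximal value of each digit realized precisely by $w_0$), each position $i$ ranges over exactly $d_i+1$ values, and the positions are independent; hence the count is $\prod_{i=0}^{n-1}(d_i+1)=\prod_{i=0}^{n-1}m(i+1)=m^n\,n!$. I do not anticipate a genuine obstacle here: the only point requiring care is justifying that the maximal digit $d_i=m(i+1)-1$ attained by $w_0$ coincides with the upper bound on admissible digits, so that the product telescopes correctly; this is immediate from the digit constraints already established, and the longest-element interpretation simply confirms that $w_0$ sits at the top of the range in every coordinate.
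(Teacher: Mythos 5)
Your proposal is correct and follows essentially the same route as the paper, which leaves the corollary as an immediate consequence of the observation that the subexceedant function of $w_0$ is $f(i)=i$ together with the conversion rule $d_i=m(f(i+1)-1)+r_{i+1}$ with all $r_k=m-1$. The concluding count $\prod_{i=0}^{n-1}(d_i+1)=m^n n!$ matches the paper's (unstated) justification via the digit bounds and the bijection of Theorem \ref{funda}.
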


The exponents of the group $G(m,1,n)$ are well-known as $e_1=m-1,~e_2=2m-1,\cdots, e_n=nm-1$ from \cite{br12}. Notice that all components in the integer representation of $w_0$ in Corollary \ref{corollary} are actually nothing else but the exponents of the group $G(m,1,n)$. 

The following algorithm is useful for finding the subexceedant function corresponding to any number $d$ in the $G_{m,n}$-type number system:\\

\textbf{Algorithm 3:} \hspace*{\fill} \\
from math import floor\\
n=int(input('Enter the value of n:'))\\
m=int(input('Enter the value of m:'))\\
c=[]\\
for i in range(1,n+1):\\
    d=int(input('Enter digit in $G_(m,n)$ base system:'))\\
    $f=1+\textrm{floor}(d/m)$\\
    c.append(f)\\
print("The values of subexceedant function is:", c)\\

\begin{ex}
Let $d=(17:14:11:8:5:2)\in G_{3,6}$. The corresponding subexceedant function of $d$ is immediately obtained as $f (1); f(2); f(3); f(4); f(5); f(6)=1;2;3;4;5;6$ when using Algorithm 3.
\end{ex}

The following Python algorithm is also used to convert any text message into its numerical value:\\

\textbf{Algorithm 4:} \hspace*{\fill} \\
print("Enter a string: ", end="")\\
text = input()\\
for char in text:\\
    ascii = ord(char)\\
    print(ascii, end="")\\

\begin{ex}
Taking the sentence THE QUICK BROWN FOX JUMPS OVER THE LAZY DOG, then its ASCII code is $[84, 72, 69, 32, 81, 85, 73, 67, 75, 32, 66, 82, \\ 79, 87, 78, 32, 70, 79, 88, 32, 74, 85, 77, 80, 83, 32, 79, 86, 69, 82, 32, 84, 72,  69, 32, 76, 65,\\ 90, 89, 32, 68, 79, 71].$
If we get m=7, then we obtain the integer representation of $84726932818573677532668279877832707988327485778083327986698232847269327\\665908932687971$ in $G_{7,42}$-type number system as  
$(56:238:8:270:218:133:236:210:204:102:63:208:157:94:171:89:19:20:50:67:121:134:75:30:37:58:97:104:58:2:75:31:42:24:43:2:17:3:16:16:0:3)$ using Algorithm 1.
\end{ex}

\section{Inversion Statistic on $G(m,1,n)$}
A finite Weyl group has two canonical length functions, essentially identical, which reveal a great deal of facts about the structure of the group. The first length function is defined as the length of the reduced expressions in terms of the set of standard generators, while the other length function is defined by the effect of the Weyl group on the positive root system. For the generalized symmetric group, the length function $l$ defined by reduced expressions with respect to the set of the canonical generators $S=\{t_1, s_1, \cdots, s_{n-1}\}$, is unfortunately not a very useful tool for studying the structure of such groups. For this reason, Bremke and Malle in \cite{br2} defined a new length function based on the generalized root system, which resembles the root system of $G(m,1,n)$ in \cite{br3'}.

Let $V$ be a complex vector space $\mathbb{C}^n$ with the standard unitary inner product. Let $\{e_1,\cdots, e_n\}$ be the set of standard basis vectors of $V$. Actually, the imprimitive complex reflection group $G(m,1,n)\subset GL_n(\mathbb{C})$ is generated by the reflections $s_1,\cdots,s_{n-1}$ of order $2$ associated with the roots $e_2-e_1,\cdots, e_n-e_{n-1}$, respectively, and a (complex) reflection $t_1$ of order $m$ with root $e_1$. Our next aim is to describe a new statistic on $G(m,1,n)$. To introduce this new statistic on $G(m,1,n)$, it would make more sense to think of $G(m,1,n)$ as a complex reflection group with the following root system given in \cite{br2}:
\[
\Phi=\{\varepsilon^{i}e_j-\varepsilon^{k}e_l  ~~|~~\varepsilon^{i}e_j\neq \varepsilon^{k}e_l,~~0\leq i, k\leq m-1,~~1 \leq j, l\leq n\}.\\
\]
Positive and negative root systems are defined in the following way, respectively:
\begin{align*}
\Phi^{+}=&\{\varepsilon^{i}e_j-\varepsilon^{k}e_j \in \Phi~~|~~0\leq i<k\leq m-1,~~1 \leq j \leq n\}\\
&\cup \{e_j-\varepsilon^{k}e_l \in \Phi~~|~~0\leq k\leq m-1,~~1 \leq l < j \leq n\}\\
&\cup \{\varepsilon^{i}e_j-\varepsilon^{k}e_l \in \Phi~~|~~0 \leq i, k\leq m-1,~~k\neq 0,~~1 \leq j < l \leq n\},
\end{align*}
and $\Phi^{-}=\Phi\setminus\Phi^{+}=-\Phi^{+}$. It is clear that $|\Phi|=mn(mn-1)$ and $|\Phi^{+}|=|\Phi^{-}|=\frac{|\Phi|}{2}$. Now, let 
\begin{equation}\label{simplesystem}
\Delta =\{ e_j-\varepsilon^{k}e_l  \in \Phi~~|~~0\leq k\leq m-1,~~1 \leq l\leq j \leq n \} \subset \Phi^{+}.
\end{equation}
Note that, $\Delta$ is a subset of $\Phi^{+}$ and it contains only one root for each reflection in $G(m,1,n)$. Therefore, a triple $(\Phi,\Phi^{-},\Delta)$ is so-called a \textit{root system} of type $B_n^{(m)}$ for the complex reflection group $G(m,1,n)$ (see \cite{br2}). From \cite{br2}, the length function $\textit{L}$ connected with the root system $(\Phi,\Phi^{-},\Delta)$ is defined as 
\begin{equation}\label{len}
\textit{L}~:~G(m,1,n) \rightarrow \mathbb{N}_0,~~~\textit{L}(w)=|w(\Delta) \cap \Phi^{-}|.
\end{equation}
Moreover, one can easily check the fact that  $$w_0(\Delta)=\varepsilon^{m-1}\Delta \subset \Phi^{-}$$ holds, where $w_0 = \bigl(\begin{smallmatrix}
    1 & 2 &  \cdots  & n \\
    \varepsilon^{m-1}~~ 1 ~~~~& \varepsilon^{m-1}~~ 2 ~~&~~~~ \cdots   ~~~~&  \varepsilon^{m-1}~~ n
  \end{smallmatrix}\bigr) \in G(m,1,n)$
is the longest element ($\textit{l}(w_0)=n(n+m-2)$) of $G(m,1,n)$  in relation to the length function $\textit{l}$, which is defined by reduced expressions with respect to the set of the canonical generators $S$.
\begin{rem}
It should be noted here that the two length functions $l$ and $L$ defined on $G(m,1,n)$ are generally not identical with each other. It is also clear that in the case of $m=2$ the length function $L$ coincides with the canonical length function of the Weyl group of type $C_n$.
\end{rem}

\begin{defn}
Let $\sigma \in S_n$. A pair $(i,j) \in [1,n] \times [1,n]$ is called an inversion of $\sigma$ if $i<j$ and $\sigma_i>\sigma_j$ \cite{br10}. 
\end{defn}

Now, we define 
\begin{align*}
\Delta_i=&\{e_{n+1-i}-\varepsilon^{k}e_{n+1-i} \in \Delta~:~0 < k \leq m-1\}\\
&\cup \{e_{n+1-i}-\varepsilon^{k}e_j \in \Delta~:~0 \leq k \leq m-1,~~j<n+1-i \leq n\}.
\end{align*}
for all $i=1,\cdots, n$. Furthermore, the length of the longest element $w_0$ of $G(m,1,n)$ (with respect to the definition in (\ref{len})) is equal to $\textit{L}(w_0)=|\Delta|=n(m-1)+\frac{nm(n-1)}{2}$. Clearly, $|\Delta_i|=m(n-i+1)-1$ for all $i=1,\cdots, n$.
\begin{defn}
For any $w \in G(m,1,n)$, we define the number of $i$-inversions of $w$ for all $i=1,\cdots, n$ as follows:
\[
inv_i(w)=|w(\Delta_i) \cap \Phi^{-}|.
\]

\end{defn}

Based on the definitions of the concepts $\Delta_i$ and $inv_i(w)$, we can conclude the following theorem.
\begin{thm}\label{22}
We have the following combinatorial properties:
\begin{enumerate}
    \item The set $\Delta$ given in (\ref{simplesystem}) can be decomposed as
\begin{equation*}
    \Delta=\bigsqcup_{i=1}^n \Delta_i.
\end{equation*}
    \item The length of any $w \in G(m,1,n)$ is expressed as $\textit{L}(w)=\sum_{i=1}^n inv_i (w)$.
    
    \item For any $w \in G(m,1,n)$ and for all $i=1,\cdots, n$, we have  
\begin{equation}\label{newexp}
        0 \leq inv_i (w) \leq m(n-i+1)-1.
\end{equation}
\end{enumerate}
\end{thm}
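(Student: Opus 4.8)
The plan is to treat the three assertions in order, with part (1) carrying essentially all of the structural content and parts (2) and (3) following formally from the injectivity of the $w$-action on $V$.

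For part (1), I would observe that every root in $\Delta$ has the form $e_j-\varepsilon^{k}e_l$ with a well-defined \emph{leading index} $j\in[1,n]$, and that $\Delta_i$ is precisely the collection of those roots in $\Delta$ whose leading index equals $n+1-i$. Writing $p=n+1-i$, I would split the roots of $\Delta$ with leading index $p$ into the diagonal case $l=p$ and the off-diagonal case $l<p$. In the diagonal case the defining condition $e_p\neq \varepsilon^{k}e_p$ inherited from $\Phi$ forces $k\neq 0$, producing the $m-1$ roots $e_p-\varepsilon^{k}e_p$ with $0<k\leq m-1$; in the off-diagonal case every $0\leq k\leq m-1$ is admissible, producing $m(p-1)$ roots. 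These two families are exactly the two sets appearing in the definition of $\Delta_i$. Since the leading index of a root in $\Delta$ is uniquely determined and ranges over all of $[1,n]$ as $i$ ranges over $[1,n]$, the sets $\Delta_i$ are pairwise disjoint and together exhaust $\Delta$, which gives $\Delta=\bigsqcup_{i=1}^{n}\Delta_i$. As a by-product this confirms $|\Delta_i|=(m-1)+m(p-1)=m(n-i+1)-1$.

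For part (2), I would use that each $w\in G(m,1,n)\subset GL_n(\mathbb{C})$ permutes $\Phi$ and in particular acts injectively, so the images $w(\Delta_1),\dots,w(\Delta_n)$ are again pairwise disjoint and $w(\Delta)=\bigsqcup_{i=1}^{n}w(\Delta_i)$. Intersecting this disjoint union with $\Phi^{-}$ and counting then yields
$$
L(w)=|w(\Delta)\cap\Phi^{-}|=\sum_{i=1}^{n}|w(\Delta_i)\cap\Phi^{-}|=\sum_{i=1}^{n}inv_i(w),
$$
straight from the definition of $L$ in (\ref{len}) and of $inv_i$.

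For part (3), the lower bound $inv_i(w)\geq 0$ is immediate since $inv_i(w)$ counts a set, and for the upper bound I would again invoke injectivity of $w$ to get $|w(\Delta_i)|=|\Delta_i|=m(n-i+1)-1$ by part (1); because $inv_i(w)=|w(\Delta_i)\cap\Phi^{-}|$ counts only a subset of $w(\Delta_i)$, the bound $inv_i(w)\leq m(n-i+1)-1$ follows. I do not expect a genuine obstacle here: the only point demanding care is the diagonal edge case in part (1), namely ensuring $k\neq 0$ so that $e_p-\varepsilon^{k}e_p$ is an actual root of $\Phi$, after which everything reduces to the fact that the linear bijection $w$ preserves both disjointness and cardinality.
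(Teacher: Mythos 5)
Your proof is correct and follows exactly the route the paper intends: the paper states Theorem \ref{22} without proof as an immediate consequence of the definitions of $\Delta$, $\Delta_i$, $inv_i$ and $L$, and your argument (partitioning $\Delta$ by the leading index $n+1-i$, counting $(m-1)+m(p-1)$ roots per block, and using injectivity of $w$ to split $|w(\Delta)\cap\Phi^{-}|$ over the blocks) is precisely the verification being left to the reader. No gaps; the one delicate point you flag, that $k\neq 0$ is forced in the diagonal case so that $e_p-\varepsilon^{k}e_p$ is a genuine root, is handled correctly.
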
  
As a consequence of part (2) of Theorem \ref{22}, we deduce that the sum of the $i$-inversions of any given element $w\in G(m,1,n)$, will be denoted by $inv(w)$, can be actually used to calculate the length of the element depending on the length function $\textit{L}$.
Denote the inversion sequence $(inv_1(w):\cdots: inv_n(w))$ of $i$-inversions of $w$ by $Inv(w)$. The sequence $Inv(w)=(inv_1(w):\cdots: inv_n(w))$ is called the  \textit{inversion table} of $w$. As a result of the equation (\ref{newexp}), we say that the inversion table possesses the same properties as a number in the $G_{m,n}$-type number system. Therefore, this enable us to introduce a new kind of classification of all elements of $G(m,1,n)$. When we enumerate all the elements of the group $G(m,1,n)$ in lexicographic order, we assign each element of the group to a different integer in the following manner: Given the inversion table $Inv(w)=(inv_1(w):\cdots: inv_n(w))$ of $w$, then we define the \textit{rank} of $w$ as $x+1$, where $x$ is the positive integer corresponding to the number $(inv_1(w):\cdots: inv_n(w))$ in the $G_{m,n}$-type number system. This means that a new enumeration system is created on $G(m,1,n)$.

\begin{ex}
Let $(\Phi,\Phi^{-},\Delta)$ be the root system of type $B_3^{(3)}$, where
\begin{align*}
\Delta=&\{e_3-\varepsilon e_3, e_3-\varepsilon^2 e_3, e_3-e_1, e_3-\varepsilon e_1, e_3-\varepsilon^2 e_1, e_3-e_2, e_3-\varepsilon e_2, e_3-\varepsilon^2 e_2\}\\
&\cup \{e_2-\varepsilon e_2, e_2-\varepsilon^2 e_2, e_2-e_1, e_2-\varepsilon e_1, e_2-\varepsilon^2 e_1\}\\
&\cup \{e_1-\varepsilon e_1, e_1-\varepsilon^2 e_1\}
\end{align*}
and 
\begin{align*}
\Phi^{-}=&\{\varepsilon^{k}e_j -\varepsilon^{i}e_j\in \Phi~~|~~0\leq i<k\leq 2,~~1 \leq j \leq 3\}\\
&\cup \{\varepsilon^{k}e_l-e_j \in \Phi~~|~~0\leq k\leq 2,~~1 \leq l < j \leq 3\}\\
&\cup \{\varepsilon^{k}e_l-\varepsilon^{i}e_j \in \Phi~~|~~0\leq i, k\leq 2,~~k\neq 0,~~1 \leq j < l \leq 3\}.
\end{align*}
We determine $\Delta_1=\{e_3-\varepsilon e_3, e_3-\varepsilon^2 e_3, e_3-e_1, e_3-\varepsilon e_1, e_3-\varepsilon^2 e_1, e_3-e_2, e_3-\varepsilon e_2, e_3-\varepsilon^2 e_2\}$,~~~$\Delta_2= \{e_2-\varepsilon e_2, e_2-\varepsilon^2 e_2, e_2-e_1, e_2-\varepsilon e_1, e_2-\varepsilon^2 e_1\}$  and $\Delta_3=\{e_1-\varepsilon e_1, e_1-\varepsilon^2 e_1\}$. Since $w_0(\Delta)=\varepsilon^2(\Delta)\subset \Phi^{-}$ for $w_0 \in G(3,1,3)$, then we obtain the inversion table of $w_0$ as $Inv(w_0)=(inv_1(w_0):inv_2(w_0):inv_3(w_0))=(8:5:2)$. Thus the rank of $w_0$ is $162$, which is the order of the group $G(3,1,3)$. If we take another element $w = \bigl(\begin{smallmatrix}
    1 & ~~2 & ~~3 \\
    \varepsilon^{2} 3 &~~ \varepsilon 1   & ~~ 2
  \end{smallmatrix}\bigr)$ of $G(3,1,3)$, then we find the inversion table of $w$ as $Inv(w)=(inv_1(w):inv_2(w):inv_3(w))=(1:2:2)$, where $w(\Delta_1) \cap \Phi^{-}=\{ e_2-e_3\},~~       w(\Delta_2) \cap \Phi^{-}=\{\varepsilon e_1-e_1,~\varepsilon e_1-e_3\}$ and $w(\Delta_3) \cap \Phi^{-}=\{\varepsilon^2 e_3-e_3,~\varepsilon^2 e_3-\varepsilon e_3\}$.   The rank of $w$ is $27$.
\end{ex}

With the help of the next theorem, one can practically derive the inversion table $Inv(w)=(inv_1(w):\cdots: inv_n(w))$ of a  $w\in G(m,1,n)$   without using the root system structure. 

\begin{thm}\label{3}
For $w=\beta \prod_{k=1}^n t_{k}^{r_k} \in G(m,1,n)$, we have 
\begin{equation}\label{33}
inv_i(w)=r_{n+1-i}+m.|\{(j,n+1-i) : j<n+1-i, \beta_j<\beta_{n+1-i}, r_{n+1-i} \neq 0\}|+inv_{i}(\beta)
\end{equation}
for all $i=1,\cdots,n$, where $inv_i(\beta)=|\{(j,n+1-i) : j<n+1-i, \beta_j>\beta_{n+1-i} \}|$ in $S_n$. More precisely, we have $inv_i(w)=inv_i(\beta)$ when $ r_{n+1-i}=0$.
\end{thm}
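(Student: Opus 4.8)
The plan is to compute $inv_i(w)=|w(\Delta_i)\cap\Phi^{-}|$ directly, by letting $w$ act as an element of $GL_n(\mathbb{C})$ on each root of $\Delta_i$ and testing membership in $\Phi^{-}$. Writing $p=n+1-i$ for brevity and using that $w=\beta\prod_k t_k^{r_k}$ acts linearly by $w(e_j)=\varepsilon^{r_j}e_{\beta_j}$, I would first record, straight from the definitions of $\Phi^{+}$ and $\Phi^{-}=\Phi\setminus\Phi^{+}$, two membership criteria. For a root supported on a single index, $\varepsilon^{a}e_u-\varepsilon^{b}e_u\in\Phi^{-}$ iff $a>b$ as residues in $\{0,\ldots,m-1\}$; and for a root on two distinct indices, $\varepsilon^{a}e_u-\varepsilon^{b}e_v\in\Phi^{-}$ iff either ($u>v$ and $a\neq 0$) or ($u<v$ and $b=0$). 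These two facts are the engine of the whole argument.

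Next I would use the decomposition of $\Delta_i$ into its two listed pieces. For the diagonal roots $e_p-\varepsilon^{k}e_p$ with $1\le k\le m-1$, applying $w$ gives $\varepsilon^{r_p}e_{\beta_p}-\varepsilon^{r_p+k}e_{\beta_p}$, a single-index root, which by the first criterion lies in $\Phi^{-}$ iff $r_p>(r_p+k)\bmod m$. As $k$ runs over $\{1,\ldots,m-1\}$ the residues $(r_p+k)\bmod m$ range over $\{0,\ldots,m-1\}\setminus\{r_p\}$, so exactly those smaller than $r_p$ contribute, and this count equals $r_p$ (and is $0$ precisely when $r_p=0$). Hence the diagonal part contributes exactly $r_{n+1-i}$, the first summand.

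Then I would treat the off-diagonal roots $e_p-\varepsilon^{k}e_j$ with $j<p$ and $0\le k\le m-1$; here $w$ yields $\varepsilon^{r_p}e_{\beta_p}-\varepsilon^{(k+r_j)\bmod m}e_{\beta_j}$ with $\beta_p\neq\beta_j$, so the second criterion applies with $u=\beta_p$, $v=\beta_j$. For fixed $j$ I would split on the order of $\beta_j$ and $\beta_p$. If $\beta_j<\beta_p$, membership holds iff $r_p\neq 0$, independently of $k$, so such a $j$ contributes $m$ when $r_p\neq 0$ and $0$ otherwise; summing gives $m\cdot|\{j<p:\beta_j<\beta_p,\ r_p\neq 0\}|$. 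If $\beta_j>\beta_p$, membership holds iff $(k+r_j)\bmod m=0$, pinning down a unique $k$, so each such $j$ contributes exactly $1$; summing gives $|\{j<p:\beta_j>\beta_p\}|=inv_i(\beta)$. Adding the diagonal and off-diagonal contributions yields the claimed formula, and the final clause follows since both the first and second summands vanish when $r_{n+1-i}=0$.

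I expect the main obstacle to be the careful modular bookkeeping rather than any deep idea: one must reduce $r_p+k$ and $k+r_j$ modulo $m$ correctly and keep straight the asymmetric role of the hypothesis $r_p=0$, which simultaneously kills the diagonal count and the $\beta_j<\beta_p$ part while leaving $inv_i(\beta)$ untouched. A secondary point worth pinning down at the outset is justifying the linear action $w(e_j)=\varepsilon^{r_j}e_{\beta_j}$, consistent with the convention that the rightmost factor acts first, since every subsequent computation rests on it.
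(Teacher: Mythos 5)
Your proposal is correct and follows essentially the same route as the paper: the same splitting of $\Delta_i$ into the diagonal roots $e_{n+1-i}-\varepsilon^k e_{n+1-i}$ and the off-diagonal roots $e_{n+1-i}-\varepsilon^k e_j$, the same count of $r_{n+1-i}$ negative images in the first family, and the same case split on $\beta_j<\beta_{n+1-i}$ versus $\beta_j>\beta_{n+1-i}$ in the second. Your explicit statement of the two $\Phi^-$-membership criteria and the careful reduction of $k+r_j$ modulo $m$ (rather than the paper's slightly loose condition $k+r_j=m$) is a welcome tightening, but not a different argument.
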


\begin{proof}
First of all, we decompose the set $\Delta_i$ for each $i=1,\cdots,n$ into two parts $\Delta_i=\Delta_{i,1} \bigsqcup \Delta_{i,2}$ such that
\[
\Delta_{i,1}=\{e_{n+1-i}-\varepsilon^{k}e_{n+1-i} \in \Delta~:~0 < k \leq m-1\}~~ \textrm{and} 
\]
\[
\Delta_{i,2}= \{e_{n+1-i}-\varepsilon^{k}e_j \in \Delta~:~0 \leq k \leq m-1,~~j<n+1-i \leq n\}.
\]
Then $w(e_{n+1-i}-\varepsilon^{k}e_{n+1-i})=\varepsilon^{r_{n+1-i}}e_{\beta_{n+1-i}}-\varepsilon^{k+r_{n+1-i}}e_{\beta_{n+1-i}} \in\Phi^{-}$ if and only if $k+r_{n+1-i}\geq m$. Thus, the number of roots in $\Delta_{i,1}$ turned into a root in $\Phi^{-}$ by $w$ equals to $r_{n+1-i}$.\\
For $e_{n+1-i}-\varepsilon^{k}e_j$, we have $w(e_{n+1-i}-\varepsilon^{k}e_j)=\varepsilon^{r_{n+1-i}}e_{\beta_{n+1-i}}-\varepsilon^{k+r_j}e_{\beta_j}$, which lies in  $\Phi^{-}$ if and only if either $\beta_j< \beta_{n+1-i}$ and $r_{n+1-i} \neq 0$ (where $k$ precisely takes $m$ values) or  $\beta_j> \beta_{n+1-i}$ and $
k+r_j=m$. Therefore, we get the desired formula as follows:
\begin{align*}
inv_i(w)=&r_{n+1-i}+m.|\{(j,n+1-i) : j<n+1-i, \beta_j<\beta_{n+1-i}, r_{n+1-i} \neq 0\}|\\
&+|\{(j,n+1-i) : j<n+1-i, \beta_j>\beta_{n+1-i} \}|\\
=&r_{n+1-i}+m.|\{(j,n+1-i) : j<n+1-i, \beta_j<\beta_{n+1-i}, r_{n+1-i} \neq 0\}|\\
&+inv_{i}(\beta).
\end{align*}
\end{proof}

\begin{ex}
	Let $w= \bigl(\begin{smallmatrix}
		1 & ~~2 & ~~ 3&~~ 4&~~5&~~6 \\
		\varepsilon^{2}3&~~ \varepsilon^{4}1 &~~ \varepsilon6 &~~5&~~ \varepsilon 4&~~  \varepsilon^{2}2
	\end{smallmatrix}\bigr)\in G(5,1,6).$ Taking into account the equation (\ref{33}) we obtain the inversion table of $w$ as $Inv(w)=(11:13:1:11:5:2)$, and so we conclude that the length of $w$ is $L(w)=43$ and that rank of $w$ is $4321328$ using Algorithm 2.
\end{ex}

\begin{ex}
	In the Table \ref{2}, one can respectively see all ranks, 1-inversions, 2-inversions and 3-inversions of the 162 elements of $G(3,1,3)$ using Theorem \ref{3}, where $\varepsilon$ is $e^{\frac{2i \pi}{3}}$. In the following table, we will write any permutation $w$ in $G(3,1,3)$ in one-line notation as $w_1w_2w_3$.
\newpage
\renewcommand{\arraystretch}{0.0 }
	\begin{table}[h!]
	\caption{Inversion table of the group $G(3,1,3)$}\label{2}
		\label{tab:table1}		
		\centering
			\scalebox{0.9}{
			\begin{tabular}{|m{0.8cm}|m{0.3cm}m{0.4cm}m{0.4cm}|m{1.1cm}||m{0.8cm}|m{0.3cm}m{0.4cm}m{0.4cm}|m{1.1cm}||m{0.8cm}|m{0.3cm}m{0.4cm}m{0.4cm}|m{1.1cm}|}
				\hline
				\centering
				Rank&\textbf{$w_1 w_2 w_3$}&&& $\textrm{Inv}(w)$&Rank&\textbf{$w_1 w_2 w_3$}&&& $\textrm{Inv}(w)$&Rank&\textbf{$w_1 w_2 w_3$}&&&$\textrm{Inv}(w)$\\
				\hline\hline
				$\mathbf{1}$&1 &2 & 3 & 0:0:0 &$\mathbf{55}$&2 & 3 & $\varepsilon$1 & 3:0:0 &$\mathbf{109}$&1 & 3 & $\varepsilon^2$2 & 6:0:0\\
				\hline
				$\mathbf{2}$&$\varepsilon$1&  2&  3 & 0:0:1 &$\mathbf{56}$& $\varepsilon$2 &3& $\varepsilon$1 & 3:0:1 &$\mathbf{110}$& $\varepsilon$1& 3 &$\varepsilon^2$2 & 6:0:1\\
				\hline
				$\mathbf{3}$&$\varepsilon^2$1&  2 & 3 & 0:0:2 &$\mathbf{57}$& $\varepsilon^2$2& 3& $\varepsilon$1 & 3:0:2 &$\mathbf{111}$& $\varepsilon^2$1 &3 &$\varepsilon^2$2 & 6:0:2\\
				\hline
				$\mathbf{4}$&2&  1&  3 & 0:1:0 &$\mathbf{58}$& 3 &2& $\varepsilon$1 & 3:1:0 &$\mathbf{112}$&3 &1 &$\varepsilon^2$2 & 6:1:0\\
				\hline
				$\mathbf{5}$&$\varepsilon$2&  1 & 3 & 0:1:1 &$\mathbf{59}$& $\varepsilon$3& 2 &$\varepsilon$1 & 3:1:1 &$\mathbf{113}$& $\varepsilon$3& 1 &$\varepsilon^2$2 & 6:1:1\\
				\hline
				$\mathbf{6}$&$\varepsilon^2$2&  1 & 3 & 0:1:2 &$\mathbf{60}$&$\varepsilon^2$3& 2 &$\varepsilon$1 & 3:1:2 &$\mathbf{114}$ &$\varepsilon^2$3& 1 &$\varepsilon^2$2 & 6:1:2\\
				\hline
				$\mathbf{7}$&2 & $\varepsilon$1&  3 & 0:2:0 &$\mathbf{61}$& 3 &$\varepsilon$2& $\varepsilon$1 & 3:2:0 &$\mathbf{115}$& 3& $\varepsilon$1& $\varepsilon^2$2 & 6:2:0\\
				\hline
				$\mathbf{8}$&$\varepsilon$2&  $\varepsilon$1&  3 & 0:2:1 &$\mathbf{62}$&$\varepsilon$3& $\varepsilon$2& $\varepsilon$1 & 3:2:1 &$\mathbf{116}$&$\varepsilon$3& $\varepsilon$1& $\varepsilon^2$2 & 6:2:1\\
				\hline
				$\mathbf{9}$&$\varepsilon^2$2&  $\varepsilon$1&  3 & 0:2:2 &$\mathbf{63}$ &$\varepsilon^2$3& $\varepsilon$2& $\varepsilon$1 & 3:2:2 &$\mathbf{117}$&$\varepsilon^2$3& $\varepsilon$1& $\varepsilon^2$2 & 6:2:2\\
				\hline
				$\mathbf{10}$&2  &$\varepsilon^2$1&  3 & 0:3:0 &$\mathbf{64}$& 3 &$\varepsilon^2$2& $\varepsilon$1 & 3:3:0 &$\mathbf{118}$ &3& $\varepsilon^2$1& $\varepsilon^2$2 & 6:3:0\\
				\hline
				$\mathbf{11}$&$\varepsilon$2 & $\varepsilon^2$1 & 3 & 0:3:1 &$\mathbf{65}$& $\varepsilon$3& $\varepsilon^2$2 &$\varepsilon$1 & 3:3:1 &$\mathbf{119}$& $\varepsilon$3 &$\varepsilon^2$1& $\varepsilon^2$2 & 6:3:1\\
				\hline
				$\mathbf{12}$&$\varepsilon^2$2&  $\varepsilon^2$1&  3 & 0:3:2 &$\mathbf{66}$& $\varepsilon^2$3& $\varepsilon^2$2 &$\varepsilon$1 & 3:3:2 &$\mathbf{120}$& $\varepsilon^2$3& $\varepsilon^2$1& $\varepsilon^2$2 & 6:3:2\\
				\hline
				$\mathbf{13}$&1& $\varepsilon$2& 3& 0:4:0 &$\mathbf{67}$& 2 &$\varepsilon$3& $\varepsilon$1 & 3:4:0 &$\mathbf{121}$&1 &$\varepsilon$3& $\varepsilon^2$2 & 6:4:0\\
				\hline
				$\mathbf{14}$&$\varepsilon$1 &$\varepsilon$2& 3 & 0:4:1 &$\mathbf{68}$& $\varepsilon$2& $\varepsilon$3& $\varepsilon$1 & 3:4:1 &$\mathbf{122}$& $\varepsilon$1& $\varepsilon$3& $\varepsilon^2$2 & 6:4:1\\
				\hline
				$\mathbf{15}$&$\varepsilon^2$1& $\varepsilon$2& 3 & 0:4:2 &$\mathbf{69}$ &$\varepsilon^2$2& $\varepsilon$3& $\varepsilon$1 & 3:4:2 &$\mathbf{123}$&$\varepsilon^2$1 &$\varepsilon$3& $\varepsilon^2$2 & 6:4:2\\
				\hline
				$\mathbf{16}$&1& $\varepsilon^2$2& 3& 0:5:0 &$\mathbf{70}$& 2 &$\varepsilon^2$3& $\varepsilon$1 & 3:5:0 &$\mathbf{124}$ &1& $\varepsilon^2$3& $\varepsilon^2$2 & 6:5:0\\
				\hline
				$\mathbf{17}$&$\varepsilon$1& $\varepsilon^2$2& 3& 0:5:1 &$\mathbf{71}$& $\varepsilon$2& $\varepsilon^2$3& $\varepsilon$1 & 3:5:1 &$\mathbf{125}$ &$\varepsilon$1& $\varepsilon^2$3& $\varepsilon^2$2 & 6:5:1\\
				\hline
				$\mathbf{18}$&$\varepsilon^2$1& $\varepsilon^2$2& 3& 0:5:2 &$\mathbf{72}$& $\varepsilon^2$2& $\varepsilon^2$3& $\varepsilon$1 & 3:5:2 &$\mathbf{126}$ &$\varepsilon^2$1 &$\varepsilon^2$3& $\varepsilon^2$2 & 6:5:2\\
				\hline
				$\mathbf{19}$&1& 3& 2& 1:0:0 &$\mathbf{73}$& 2& 3& $\varepsilon^2$1 & 4:0:0 &$\mathbf{127}$&1& 2& $\varepsilon$3 & 7:0:0\\
				\hline
				$\mathbf{20}$& $\varepsilon$1& 3& 2& 1:0:1 &$\mathbf{74}$& $\varepsilon$2& 3 & $\varepsilon^2$1 & 4:0:1 &$\mathbf{128}$ &$\varepsilon$1& 2 &$\varepsilon$3 & 7:0:1\\
				\hline
				$\mathbf{21}$& $\varepsilon^2$1& 3& 2& 1:0:2 &$\mathbf{75}$&  $\varepsilon^2$2& 3 & $\varepsilon^2$1 & 4:0:2 &$\mathbf{129}$&$\varepsilon^2$1& 2 &$\varepsilon$3 & 7:0:2\\
				\hline
				$\mathbf{22}$&3& 1& 2& 1:1:0 &$\mathbf{76}$& 3&  2&  $\varepsilon^2$1 & 4:1:0 &$\mathbf{130}$ &2& 1& $\varepsilon$3 & 7:1:0\\
				\hline
				$\mathbf{23}$&$\varepsilon$3& 1& 2& 1:1:1 &$\mathbf{77}$& $\varepsilon$3& 2&  $\varepsilon^2$1 & 4:1:1 &$\mathbf{131}$ &$\varepsilon$2& 1 &$\varepsilon$3 & 7:1:1\\
				\hline
				$\mathbf{24}$&$\varepsilon^2$3& 1& 2& 1:1:2 &$\mathbf{78}$&  $\varepsilon^2$3& 2&  $\varepsilon^2$1 &4:1:2 &$\mathbf{132}$ &$\varepsilon^2$2& 1 &$\varepsilon$3 & 7:1:2\\
				\hline
				$\mathbf{25}$&3& $\varepsilon$1& 2& 1:2:0 &$\mathbf{79}$& 3&  $\varepsilon$2&  $\varepsilon^2$1 & 4:2:0 &$\mathbf{133}$ &2 &$\varepsilon$1 &$\varepsilon$3 & 7:2:0\\
				\hline	
				$\mathbf{26}$&$\varepsilon$3 &$\varepsilon$1 &2& 1:2:1 &$\mathbf{80}$&  $\varepsilon$3&  $\varepsilon$2  &$\varepsilon^2$1 & 4:2:1 &$\mathbf{134}$&$\varepsilon$2& $\varepsilon$1& $\varepsilon$3 & 7:2:1\\
				\hline
				$\mathbf{27}$&$\varepsilon^2$3 &$\varepsilon$1& 2& 1:2:2 &$\mathbf{81}$& $\varepsilon^2$3 & $\varepsilon$2 & $\varepsilon^2$1 & 4:2:2 &$\mathbf{135}$ &$\varepsilon^2$2& $\varepsilon$1& $\varepsilon$3 & 7:2:2\\
				\hline
				$\mathbf{28}$&3 &$\varepsilon^2$1 &2& 1:3:0 &$\mathbf{82}$&3 & $\varepsilon^2$2&  $\varepsilon^2$1 & 4:3:0 &$\mathbf{136}$ &2 &$\varepsilon^2$1 &$\varepsilon$3 & 7:3:0\\
				\hline
				$\mathbf{29}$&$\varepsilon$3& $\varepsilon^2$1& 2& 1:3:1 &$\mathbf{83}$& $\varepsilon$3&  $\varepsilon^2$2 & $\varepsilon^2$1 & 4:3:1 &$\mathbf{137}$&$\varepsilon$2 &$\varepsilon^2$1& $\varepsilon$3 & 7:3:1\\
				\hline
				$\mathbf{30}$&$\varepsilon^2$3 &$\varepsilon^2$1& 2& 1:3:2 &$\mathbf{84}$& $\varepsilon^2$3 & $\varepsilon^2$2&  $\varepsilon^2$1 & 4:3:2 &$\mathbf{138}$ &$\varepsilon^2$2& $\varepsilon^2$1 &$\varepsilon$3 & 7:3:2\\
				\hline
				$\mathbf{31}$&1& $\varepsilon$3& 2& 1:4:0 &$\mathbf{85}$&2  &$\varepsilon$3 & $\varepsilon^2$1 & 4:4:0 &$\mathbf{139}$&1& $\varepsilon$2 &$\varepsilon$3 & 7:4:0\\
				\hline
				$\mathbf{32}$&$\varepsilon$1& $\varepsilon$3 &2& 1:4:1 &$\mathbf{86}$&  $\varepsilon$2 & $\varepsilon$3&  $\varepsilon^2$1 & 4:4:1 &$\mathbf{140}$&$\varepsilon$1& $\varepsilon$2& $\varepsilon$3 & 7:4:1\\
				\hline
				$\mathbf{33}$&$\varepsilon^2$1& $\varepsilon$3 &2& 1:4:2 &$\mathbf{87}$&  $\varepsilon^2$2&  $\varepsilon$3 & $\varepsilon^2$1 & 4:4:2 &$\mathbf{141}$&$\varepsilon^2$1 &$\varepsilon$2& $\varepsilon$3 & 7:4:2\\
				\hline
				$\mathbf{34}$&1 &$\varepsilon^2$3 &2& 1:5:0 &$\mathbf{88}$&2 & $\varepsilon^2$3 & $\varepsilon^2$1 & 4:5:0 &$\mathbf{142}$&1& $\varepsilon^2$2 &$\varepsilon$3 & 7:5:0\\
				\hline
				$\mathbf{35}$&$\varepsilon$1& $\varepsilon^2$3 &2& 1:5:1 &$\mathbf{89}$&  $\varepsilon$2&  $\varepsilon^2$3 & $\varepsilon^2$1 & 4:5:1 &$\mathbf{143}$&$\varepsilon$1 &$\varepsilon^2$2& $\varepsilon$3 & 7:5:1\\
				\hline
				$\mathbf{36}$&$\varepsilon^2$1& $\varepsilon^2$3 &2& 1:5:2 &$\mathbf{90}$&  $\varepsilon^2$2&  $\varepsilon^2$3 & $\varepsilon^2$1 & 4:5:2 &$\mathbf{144}$ &$\varepsilon^2$1& $\varepsilon^2$2& $\varepsilon$3 & 7:5:2\\
				\hline
				$\mathbf{37}$&2 &3 &1& 2:0:0 &$\mathbf{91}$& 1& 3 & $\varepsilon$2 & 5:0:0 &$\mathbf{145}$ &1 &2& $\varepsilon^2$3 & 8:0:0\\
				\hline
				$\mathbf{38}$&$\varepsilon$2 &3& 1& 2:0:1 &$\mathbf{92}$&$\varepsilon$1& 3 &$\varepsilon$2 & 5:0:1 &$\mathbf{146}$ &$\varepsilon$1& 2 &$\varepsilon^2$3 & 8:0:1\\
				\hline
				$\mathbf{39}$&$\varepsilon^2$2& 3& 1& 2:0:2 &$\mathbf{93}$& $\varepsilon^2$1& 3 &$\varepsilon$2 & 5:0:2 &$\mathbf{147}$ &$\varepsilon^2$1& 2 &$\varepsilon^2$3 & 8:0:2\\
				\hline
				$\mathbf{40}$&3& 2 &1& 2:1:0 &$\mathbf{94}$&3 &1 &$\varepsilon$2 & 5:1:0 &$\mathbf{148}$&2 &1& $\varepsilon^2$3 & 8:1:0\\
				\hline
				$\mathbf{41}$&$\varepsilon$3 &2 &1& 2:1:1 &$\mathbf{95}$& $\varepsilon$3 &1 &$\varepsilon$2 & 5:1:1 &$\mathbf{149}$ &$\varepsilon$2 &1 &$\varepsilon^2$3 & 8:1:1\\
				\hline
				$\mathbf{42}$&$\varepsilon^2$3& 2& 1& 2:1:2 &$\mathbf{96}$&$\varepsilon^2$3 &1& $\varepsilon$2 & 5:1:2 &$\mathbf{150}$ &$\varepsilon^2$2& 1& $\varepsilon^2$3 & 8:1:2\\
				\hline
				$\mathbf{43}$&3 &$\varepsilon$2 &1& 2:2:0 &$\mathbf{97}$&3 &$\varepsilon$1& $\varepsilon$2 & 5:2:0 &$\mathbf{151}$ &2 &$\varepsilon$1& $\varepsilon^2$3 & 8:2:0\\
				\hline
				$\mathbf{44}$&$\varepsilon$3 &$\varepsilon$2& 1& 2:2:1 &$\mathbf{98}$&$\varepsilon$3& $\varepsilon$1& $\varepsilon$2 & 5:2:1 &$\mathbf{152}$ &$\varepsilon$2& $\varepsilon$1& $\varepsilon^2$3 & 8:2:1\\
				\hline
				$\mathbf{45}$&$\varepsilon^2$3 &$\varepsilon$2& 1& 2:2:2 &$\mathbf{99}$&$\varepsilon^2$3 &$\varepsilon$1& $\varepsilon$2 & 5:2:2 &$\mathbf{153}$ &$\varepsilon^2$2& $\varepsilon$1& $\varepsilon^2$3 & 8:2:2\\
				\hline
				$\mathbf{46}$&3 &$\varepsilon^2$2& 1& 2:3:0 &$\mathbf{100}$& 3 &$\varepsilon^2$1& $\varepsilon$2 & 5:3:0 &$\mathbf{154}$&2 &$\varepsilon^2$1 &$\varepsilon^2$3 & 8:3:0\\
				\hline
				$\mathbf{47}$&$\varepsilon$3 &$\varepsilon^2$2 &1& 2:3:1 &$\mathbf{101}$&$\varepsilon$3 &$\varepsilon^2$1 &$\varepsilon$2 & 5:3:1 &$\mathbf{155}$ &$\varepsilon$2 &$\varepsilon^2$1& $\varepsilon^2$3 & 8:3:1\\
				\hline
				$\mathbf{48}$&$\varepsilon^2$3& $\varepsilon^2$2& 1& 2:3:2 &$\mathbf{102}$& $\varepsilon^2$3& $\varepsilon^2$1 &$\varepsilon$2 & 5:3:2 &$\mathbf{156}$&$\varepsilon^2$2 &$\varepsilon^2$1& $\varepsilon^2$3 & 8:3:2\\
				\hline
				$\mathbf{49}$&2 &$\varepsilon$3& 1& 2:4:0 &$\mathbf{103}$&1 &$\varepsilon$3 &$\varepsilon$2 & 5:4:0 &$\mathbf{157}$&1 &$\varepsilon$2& $\varepsilon^2$3 & 8:4:0\\
				\hline
				$\mathbf{50}$&$\varepsilon$2 &$\varepsilon$3 &1& 2:4:1 &$\mathbf{104}$& $\varepsilon$1& $\varepsilon$3& $\varepsilon$2 & 5:4:1 &$\mathbf{158}$ &$\varepsilon$1 &$\varepsilon$2 &$\varepsilon^2$3 & 8:4:1\\
				\hline
				$\mathbf{51}$&$\varepsilon^2$2& $\varepsilon$3& 1& 2:4:2 &$\mathbf{105}$&$\varepsilon^2$1& $\varepsilon$3 &$\varepsilon$2 & 5:4:2 &$\mathbf{159}$ &$\varepsilon^2$1& $\varepsilon$2& $\varepsilon^2$3 & 8:4:2\\
				\hline
				$\mathbf{52}$&2 &$\varepsilon^2$3& 1& 2:5:0 &$\mathbf{106}$&1& $\varepsilon^2$3& $\varepsilon$2 & 5:5:0 &$\mathbf{160}$ &1 &$\varepsilon^2$2& $\varepsilon^2$3 & 8:5:0\\
				\hline
				$\mathbf{53}$&$\varepsilon$2& $\varepsilon^2$3& 1& 2:5:1 &$\mathbf{107}$&$\varepsilon$1& $\varepsilon^2$3& $\varepsilon$2 & 5:5:1 &$\mathbf{161}$ &$\varepsilon$1& $\varepsilon^2$2& $\varepsilon^2$3 & 8:5:1\\
				\hline
				$\mathbf{54}$&$\varepsilon^2$2& $\varepsilon^2$3& 1& 2:5:2 &$\mathbf{108}$&$\varepsilon^2$1& $\varepsilon^2$3& $\varepsilon$2 & 5:5:2 &$\mathbf{162}$ &$\varepsilon^2$1& $\varepsilon^2$2& $\varepsilon^2$3 & 8:5:2\\
				\hline
			\end{tabular}}
		\end{table}
\end{ex}

Now, on the contrary, we will give an effective method of how to associate a permutation $w\in G(m,1,n)$ with a given rank value as follows:
For this reason, firstly fix the positive integer $m$. Then consider a rank $k$. After that, convert $k-1$ to a number in $G_{m,n}$-type number system. As is well-known, each number in the $G_{m,n}$-type number system gives us an unique permutation in the generalized symmetric group $G(m,1,n)$. Let us represent $k-1$ by $(d_{n-1}:\cdots :d_1:d_0)\in G_{m,n}$. In fact, we are going to generate a $w$ permutation here such that
\[
inv_i(w)=d_{n-i}~\textrm{for}~\textrm{all}~i=1,\cdots,n.
\]

We will establish a $w$ permutation corresponding to $(d_{n-1}:\cdots :d_1:d_0)$ by proceeding the following steps:
 \begin{itemize}
     \item After listing all possible values that the desired permutation can take as   
\begin{equation}\label{converse1}
n,\cdots,1;\varepsilon1,\cdots, \varepsilon^{m-1}1;\cdots;\varepsilon n,\cdots, \varepsilon^{m-1}n
\end{equation}
number them from $0$ to $nm-1$ by starting with the leftmost value. Then determine the value corresponding to the number $d_{n-1}$ from (\ref{converse1}) and assign it to $w_n$.\\
     \item Say $w_n:=\varepsilon^{r_i}i$. Remove the terms $i,\varepsilon i,\cdots, \varepsilon^{m-1}i$  from (\ref{converse1}). Reorder the remaining values as 
\begin{equation}\label{converse2}
\begin{split}
n,\cdots,i+1,i-1,\cdots,1;&\varepsilon1,\cdots, \varepsilon^{m-1}1;\cdots;\varepsilon (i-1),\cdots, \varepsilon^{m-1}(i-1);\\
&\varepsilon (i+1),\cdots, \varepsilon^{m-1}(i+1);\cdots;\varepsilon n,\cdots, \varepsilon^{m-1}n
\end{split}
\end{equation}
and renumber them from $0$ to $(n-1)m-1$ by starting with the leftmost term. Then find the value corresponding to the number $d_{n-2}$ from (\ref{converse2}) and set it as $w_{n-1}$.
     \item Apply the same procedure to (\ref{converse2}) and determine in this manner $w_{n-2}$.
     \item Continue these operations until you determine all $w_i$ values for each $1 \leq i \leq n$.
 \end{itemize}

Let us consider the following example to illustrate this method.
\begin{ex}
We will find the $4321328^{th}$ group element of $G(5,1,6)$. If we apply Algorithm 1 to the positive integer $4321327$, then we obtain the inversion table of the group element which we are looking for as $Inv(w)=(11:13:1:11:5:2)$.  \\

\scalebox{0.7}{
	\begin{tabular}{|m{1.2cm}|m{1.1cm}|m{1.9cm}|m{0.2cm}m{0.3cm}m{0.3cm}m{0.4cm}|m{0.2cm}m{0.3cm}m{0.3cm}m{0.4cm}|m{0.2cm}m{0.3cm}m{0.3cm}m{0.4cm}|m{0.2cm}m{0.3cm}m{0.3cm}m{0.4cm}|m{0.2cm}m{0.3cm}m{0.3cm}m{0.4cm}|m{0.2cm}m{0.3cm}m{0.3cm}m{0.4cm}|}
		\hline
		\multirow{2}{*}{$1^{th}$ step} &P.P.V.&6 5 4 3 2 1 & $\varepsilon$1& $\varepsilon^2$1&$\varepsilon^3$1& $\varepsilon^4$1&$\varepsilon$2& \color{blue} $\varepsilon^2$2&$\varepsilon^3$2&$\varepsilon^4$2& $\varepsilon$3&$\varepsilon^2$3&$\varepsilon^3$3&$\varepsilon^4$3&$\varepsilon$4&$\varepsilon^2$4&$\varepsilon^3$4&$\varepsilon^4$4&$\varepsilon$5&$\varepsilon^2$5&$\varepsilon^3$5&$\varepsilon^4$5&$\varepsilon$6&$\varepsilon^2$6&$\varepsilon^3$6&$\varepsilon^4$6\\
		\cline{2-27}
		&P.I.V.&0 1 2 3 4 5& 6 &7 &8& 9&10& \color{blue} 11&12&13&14&15&16&17&18&19&20&21&22&23&24&25&26&27&28&29\\
		
		\hline
\end{tabular}}

\scalebox{0.7}{
	\begin{tabular}{|m{1.2cm}|m{1.1cm}|m{1.9cm}|m{0.2cm}m{0.3cm}m{0.3cm}m{0.4cm}|m{0.2cm}m{0.3cm}m{0.3cm}m{0.4cm}|m{0.2cm}m{0.3cm}m{0.3cm}m{0.4cm}|m{0.2cm}m{0.3cm}m{0.3cm}m{0.4cm}|m{0.2cm}m{0.3cm}m{0.3cm}m{0.4cm}|}
		\hline
		\multirow{2}{*}{$2^{th}$ step} &P.P.V.&6 5 4 3  1 & $\varepsilon$1& $\varepsilon^2$1&$\varepsilon^3$1& $\varepsilon^4$1& $\varepsilon$3&$\varepsilon^2$3&$\varepsilon^3$3&$\varepsilon^4$3& \color{blue} $\varepsilon$4&$\varepsilon^2$4&$\varepsilon^3$4&$\varepsilon^4$4&$\varepsilon$5&$\varepsilon^2$5&$\varepsilon^3$5&$\varepsilon^4$5&$\varepsilon$6&$\varepsilon^2$6&$\varepsilon^3$6&$\varepsilon^4$6\\
		\cline{2-23}
		&P.I.V.&0 1 2 3 4 &5& 6 &7 &8& 9&10&11&12&\color{blue} 13&14&15&16&17&18&19&20&21&22&23&24\\
		
		\hline
\end{tabular}}

\scalebox{0.7}{
	\begin{tabular}{|m{1.2cm}|m{1.1cm}|m{1.9cm}|m{0.2cm}m{0.3cm}m{0.3cm}m{0.4cm}|m{0.2cm}m{0.3cm}m{0.3cm}m{0.4cm}|m{0.2cm}m{0.3cm}m{0.3cm}m{0.4cm}|m{0.2cm}m{0.3cm}m{0.3cm}m{0.4cm}|}
		\hline
		\multirow{2}{*}{$3^{th}$ step}&P.P.V.&6 \color{blue} 5 \color{black} 3 1& $\varepsilon$1& $\varepsilon^2$1&$\varepsilon^3$1& $\varepsilon^4$1& $\varepsilon$3&$\varepsilon^2$3&$\varepsilon^3$3&$\varepsilon^4$3&$\varepsilon$5&$\varepsilon^2$5&$\varepsilon^3$5&$\varepsilon^4$5&$\varepsilon$6&$\varepsilon^2$6&$\varepsilon^3$6&$\varepsilon^4$6\\
		\cline{2-19}
		&P.I.V.&0 \color{blue} 1 \color{black} 2 3&4& 5& 6 &7 &8& 9&10&11&12&13&14&15&16&17&18&19\\
		\hline
\end{tabular}}

\scalebox{0.7}{
	\begin{tabular}{|m{1.2cm}|m{1.1cm}|m{1.9cm}|m{0.2cm}m{0.3cm}m{0.3cm}m{0.4cm}|m{0.2cm}m{0.3cm}m{0.3cm}m{0.4cm}|m{0.2cm}m{0.3cm}m{0.3cm}m{0.4cm}|}
		\hline
		\multirow{2}{*}{$4^{th}$ step}&P.P.V.& 6 3 1 & $\varepsilon$1& $\varepsilon^2$1&$\varepsilon^3$1& $\varepsilon^4$1& $\varepsilon$3&$\varepsilon^2$3&$\varepsilon^3$3&$\varepsilon^4$3& \color{blue} $\varepsilon$6&$\varepsilon^2$6&$\varepsilon^3$6&$\varepsilon^4$6\\
		\cline{2-15}
		&P.I.V.&0 1  2& 3&4& 5& 6 &7 &8& 9&10&\color{blue} 11&12&13&14\\
		\hline
\end{tabular}}

\scalebox{0.7}{
	\begin{tabular}{|m{1.2cm}|m{1.1cm}|m{1.9cm}|m{0.2cm}m{0.3cm}m{0.3cm}m{0.4cm}|m{0.2cm}m{0.3cm}m{0.3cm}m{0.4cm}|}
		\hline
		\multirow{2}{*}{$5^{th}$ step} &P.P.V.& 3 1& $\varepsilon$1& $\varepsilon^2$1&$\varepsilon^3$1& \color{blue}$\varepsilon^4$1& $\varepsilon$3&$\varepsilon^2$3&$\varepsilon^3$3&$\varepsilon^4$3\\
		\cline{2-11}
		&P.I.V.&0 1 & 2& 3&4& \color{blue}5& 6 &7 &8& 9\\
		\hline
\end{tabular}}

\scalebox{0.7}{
	\begin{tabular}{|m{1.2cm}|m{1.1cm}|m{1.9cm}|m{0.2cm}m{0.3cm}m{0.3cm}m{0.4cm}|}
		\hline
		\multirow{2}{*}{$6^{th}$ step}  &P.P.V.& 3&  $\varepsilon$3&\color{blue}$\varepsilon^2$3&$\varepsilon^3$3&$\varepsilon^4$3\\
		\cline{2-7}
		&P.I.V.&0 &1&\color{blue}2& 3&4\\
		\hline
\end{tabular}}\\

In the above table, P.P.V. and P.I.V. stand for the abbreviations of possible values that the desired permutation can take and possible inversion values, respectively. Using the table, then $w\in G(5,1,6)$ is built up as 
\begin{equation*}
 w = \bigl(\begin{smallmatrix}
    1 & ~2 &~ 3&~4&~5 & ~6 \\
    \varepsilon^{2}3 &~ \varepsilon^{4}1 &~\varepsilon 6  & ~ 5&~\varepsilon 4&~\varepsilon^{2}2
  \end{smallmatrix}\bigr).
\end{equation*}

\end{ex}

The above procedures for constructing $w\in G(m,1,n)$ from its inversion table $Inv(w)$ establishes the following result. Thus we may state the following result without proof.
\begin{prop}\label{Tmn} 
	Let
	\[
	\mathcal{T}_{m,n}=\{(a_1 : \cdots : a_n) ~~|~~ 0 \leq a_i \leq m(n-i+1)-1\}=[0,nm-1]\times [0,(n-1)m-1]\times \cdots \times [0,m-1]. 
	\] 
	The map $I : G(m,1,n) \rightarrow \mathcal{T}_{m,n}$ that sends each permutation to its inversion table is a bijection. 
\end{prop}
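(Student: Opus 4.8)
The plan is to exhibit $I$ as a bijection by producing an explicit two-sided inverse, namely the reconstruction procedure described immediately before the statement. Two ingredients are already in hand. By part~(3) of Theorem~\ref{22}, every $w \in G(m,1,n)$ satisfies $0 \le inv_i(w) \le m(n-i+1)-1$, so $I$ indeed takes values in $\mathcal{T}_{m,n}$ and is well defined. Moreover both sets are finite of the same size: $|G(m,1,n)| = m^n n!$, while $|\mathcal{T}_{m,n}| = \prod_{i=1}^n m(n-i+1) = m^n n!$. Consequently it suffices to prove that the reconstruction map $J \colon \mathcal{T}_{m,n} \to G(m,1,n)$ satisfies $I \circ J = \mathrm{id}_{\mathcal{T}_{m,n}}$; equality of cardinalities then forces $I$ and $J$ to be mutually inverse bijections.

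The core claim is that, for an arbitrary tuple $(a_1 : \cdots : a_n) \in \mathcal{T}_{m,n}$, the permutation $w = J(a_1:\cdots:a_n)$ built by the greedy step-by-step construction satisfies $inv_i(w) = a_i$ for every $i$. I would prove this by induction on $n$ using the explicit formula of Theorem~\ref{3}. For the first step one checks that the ordering of the $nm$ admissible values in~(\ref{converse1}) is calibrated precisely so that the positional index assigned to the chosen value $w_n = \varepsilon^{r_n} j$ equals $inv_1(w)$. Indeed, Theorem~\ref{3} with $i = 1$ (so $n+1-i = n$) gives $inv_1(w) = r_n + m\,|\{j' < n : \beta_{j'} < j,\ r_n \ne 0\}| + |\{j' < n : \beta_{j'} > j\}|$, and since $\beta$ is a permutation this equals $n - j$ when $r_n = 0$ and $r_n + m(j-1) + (n-j)$ when $r_n \ne 0$ --- exactly the index of $\varepsilon^{r_n} j$ in~(\ref{converse1}). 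Hence $a_1 = inv_1(w)$ and $w_n$ is the unique value consistent with the first digit.

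For the inductive step I would verify the decoupling that legitimises the recursion: after deleting $j, \varepsilon j, \dots, \varepsilon^{m-1} j$ and relabelling the surviving symbols by the order-preserving bijection onto $\{1,\dots,n-1\}$, one obtains $w' \in G(m,1,n-1)$, and the renumbering rule in~(\ref{converse2}) is exactly the restriction of the original rule to this reduced symbol set. The point is that for every $i \ge 2$ the position $n+1-i$ and all comparison indices $j' < n+1-i$ lie strictly below $n$, so the data entering Theorem~\ref{3} for $inv_i(w)$ involve neither position $n$ nor the value $j$; since the order relations and the exponents $r_{j'}$ are preserved under an order-preserving relabelling, one gets $inv_i(w) = inv_{i-1}(w')$ for $i = 2, \dots, n$. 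Applying the induction hypothesis to $w'$ and the residual tuple $(a_2 : \cdots : a_n) \in \mathcal{T}_{m,n-1}$ then completes the verification that $I \circ J = \mathrm{id}$, and the cardinality count finishes the proof. I expect this decoupling to be the main obstacle: the care lies in confirming that the order-preserving relabelling leaves every inversion count of the lower positions unchanged and that the greedy renumbering at each stage really is the restriction of a single global rule, so that the recursion closes without accumulating errors.
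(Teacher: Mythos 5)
Your proposal is correct and follows essentially the same route the paper takes: the paper states Proposition \ref{Tmn} without proof, asserting that the step-by-step reconstruction procedure preceding it establishes the bijection, and your argument is precisely a rigorous elaboration of that procedure (well-definedness via Theorem \ref{22}(3), the cardinality count $|\mathcal{T}_{m,n}|=m^n n!$, and the verification via Theorem \ref{3} that the greedy construction inverts $I$). Your index computation $r_n+m(j-1)+(n-j)=n+(j-1)(m-1)+(r_n-1)$ and the order-preserving relabelling argument for the inductive step both check out, so you have in fact supplied the details the paper omits.
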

Therefore, the inverson table $Inv(w)$ is another way to represent a permutation $w$. The fact that  $\textit{L}(w)=\sum_{i=1}^n inv_i (w)$ for any $w \in G(m,1,n)$ gives us a new approach to the proof of Poincar\'e polynomial for $G(m,1,n)$ in the sense of $S_n$ (see \cite{br10}).

\begin{cor}\label{stanley}
Let $inv(w)$ denote the sum of $i$-inversions of the permutation $w \in G(m,1,n)$. Then
\begin{equation*}
\sum_{w \in G(m,1,n)}q^{inv(w)}=\prod_{i=1}^n [im]_q
\end{equation*}
where $q$ is an indeterminate and $[im]_q=\frac{1-q^{im}}{1-q}$ for every $i=1,\cdots, n$.
\end{cor}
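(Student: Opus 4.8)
The plan is to reduce this generating-function identity to a short bijective computation using the machinery already in place, so that no new structural work is needed. First I would invoke part (2) of Theorem \ref{22}, which tells us that $inv(w)=\sum_{i=1}^{n} inv_i(w)$ is precisely the coordinate-sum of the inversion table $Inv(w)=(inv_1(w):\cdots:inv_n(w))$, together with Proposition \ref{Tmn}, which guarantees that the assignment $I:G(m,1,n)\to\mathcal{T}_{m,n}$ sending $w$ to $Inv(w)$ is a bijection onto the product set $\mathcal{T}_{m,n}=[0,nm-1]\times[0,(n-1)m-1]\times\cdots\times[0,m-1]$.

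Using this bijection I would rewrite the sum over the group as a sum over $\mathcal{T}_{m,n}$, namely
\[
\sum_{w \in G(m,1,n)}q^{inv(w)}=\sum_{(a_1,\ldots,a_n)\in\mathcal{T}_{m,n}}q^{a_1+a_2+\cdots+a_n}.
\]
Because $\mathcal{T}_{m,n}$ is a Cartesian product of intervals and the exponent is additive across coordinates, this sum factors into a product of independent one-dimensional geometric sums:
\[
\sum_{(a_1,\ldots,a_n)\in\mathcal{T}_{m,n}}q^{a_1+\cdots+a_n}=\prod_{i=1}^{n}\;\sum_{a_i=0}^{m(n-i+1)-1}q^{a_i}.
\]
Evaluating each inner sum gives $\sum_{a_i=0}^{m(n-i+1)-1}q^{a_i}=\frac{1-q^{m(n-i+1)}}{1-q}=[m(n-i+1)]_q$, and reindexing by $j=n-i+1$ (so that $j$ runs over $1,\ldots,n$ as $i$ runs over $n,\ldots,1$) turns the product into $\prod_{j=1}^{n}[jm]_q$, which is exactly the claimed right-hand side.

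As for the main obstacle: candidly, there is no serious difficulty remaining here, since all the substantive content was already carried out in Theorem \ref{22} and Proposition \ref{Tmn}. The only point that deserves a moment's care is confirming that the bijection $I$ genuinely respects the grading, that is, that $q^{inv(w)}$ is transported to $q^{a_1+\cdots+a_n}$ under $I$; but this is immediate from the definition $inv(w)=\sum_{i=1}^{n}inv_i(w)$. Everything else is the standard factorization of a generating function over a box into a product of $q$-integers, and it recovers the known Poincar\'e-polynomial factorization \eqref{flag2} for $G(m,1,n)$ — which, combined with that identity, also yields the equidistribution of $inv$ and $fmaj$.
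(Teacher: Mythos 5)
Your proposal is correct and follows essentially the same route as the paper: both use the identity $inv(w)=L(w)=\sum_{i=1}^{n}inv_i(w)$ from Theorem \ref{22} together with the bijection of Proposition \ref{Tmn} to convert the sum over $G(m,1,n)$ into a sum over the box $\mathcal{T}_{m,n}$, which then factors into a product of geometric series equal to $\prod_{i=1}^{n}[im]_q$. The only cosmetic difference is your explicit reindexing $j=n-i+1$, which the paper leaves implicit.
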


\begin{proof}
There always exists some $(a_1:\cdots: a_n)\in \mathcal{T}_{m,n}$ such that $L(w)=\sum_{i=1}^na_i$ for each $w\in G(m,1,n)$, for if we have at least $Inv(w) \in \mathcal{T}_{m,n}$ since $inv(w)=\textit{L}(w)=\sum_{i=1}^n inv_i (w)$. Considering also Proposition \ref{Tmn}, we have
\begin{align*}
\sum_{w \in G(m,1,n)}q^{inv(w)}=\sum_{w \in G(m,1,n)}q^{\textit{L}(w)}=&\sum_{a_1=0}^{nm-1}~~\sum_{a_2=0}^{(n-1)m-1}\cdots \sum_{a_n=0}^{m-1}q^{a_1+a_2+\cdots+a_n}\\
=&(\sum_{a_1=0}^{nm-1}q^{a_1}) (\sum_{a_2=0}^{(n-1)m-1}q^{a_2}) \cdots (\sum_{a_n=0}^{m-1}q^{a_n})\\
=&\prod_{i=1}^n [im]_q
\end{align*}
as desired.
\end{proof}

For any $w\in G(m,1,n)$ with the flag major index $fmaj(w)=k_0+k_1+\cdots+k_{n-1}$, where $0\leq k_i \leq mi+m-1$,~$i=0,1,\cdots,n-1$. Then we have
\begin{align*}
\sum_{w \in G(m,1,n)}q^{fmaj(w)}=&\sum_{k_{n-1}=0}^{nm-1}~~\sum_{k_{n-2}=0}^{(n-1)m-1}\cdots \sum_{k_0=0}^{m-1}q^{k_0+k_1+\cdots+k_{n-1}}\\
=&(\sum_{k_{n-1}=0}^{nm-1}q^{k_{n-1}}) (\sum_{k_{n-2}=0}^{(n-1)m-1}q^{k_{n-2}}) \cdots (\sum_{k_0=0}^{m-1}q^{k_0})\\
=&\prod_{i=1}^n [im]_q.
\end{align*}

Considering Corollary \ref{stanley} and the above result together, we conclude that the flag-major index and the inversion statistic are equi-distrubuted over $G(m,1,n)$. Furthermore, we can define a map $\phi: G(m,1,n) \rightarrow G(m,1,n)$  such that $inv(w)=fmaj(\phi(w))$ for each $w \in G(m,1,n)$ as follows: Let the inversion table of $w$ be $Inv(w)=(a_{n-1}:\cdots:a_1:a_0)$. If we define
$$\phi(w)=\sigma_{n-1}^{a_{n-1}}\cdots \sigma_{1}^{a_{1}}\sigma_{0}^{a_{0}}$$
then it is obvious that $\phi$ is a bijection and $inv(w)=\sum_{i=0}^{n-1} a_i=fmaj(\phi(w)),$  where $\sigma_{i}$ is the same as in (\ref{flag0}) for each $i,~0 \leq i \leq n-1$. As a result of these facts, we are ready to give a new interpretation of the flag-major index, which is a well-known parameter. 

\begin{thm}
The flag-major index is a Mahonian statistic on $G(m,1,n)$ with respect to the length function $L$.   
\end{thm}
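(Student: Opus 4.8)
The plan is to unwind the definition of a Mahonian statistic and then quote the generating-function identities already in place. By definition, a statistic on $G(m,1,n)$ is \emph{Mahonian with respect to} $L$ exactly when its generating function over the group agrees with $\sum_{w\in G(m,1,n)} q^{L(w)}$; so the whole theorem reduces to the single polynomial identity $\sum_{w} q^{fmaj(w)} = \sum_{w} q^{L(w)}$.

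First I would use part (2) of Theorem \ref{22}, namely $L(w)=\sum_{i=1}^{n} inv_i(w)=inv(w)$ for every $w$, to rewrite the right-hand side as $\sum_{w} q^{inv(w)}$. Corollary \ref{stanley} then evaluates this as $\prod_{i=1}^{n}[im]_q$. For the left-hand side I would invoke the distribution of the flag-major index recorded in (\ref{flag2}) (equivalently, the direct summation carried out just before the theorem), which gives $\sum_{w} q^{fmaj(w)}=\prod_{i=1}^{n}[im]_q$ as well. Comparing the two products yields the desired equality of generating functions, and hence Mahonianity, immediately.

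For a sharper, bijective version I would produce the explicit statistic-transporting map already sketched: send $w$ with inversion table $Inv(w)=(a_{n-1}:\cdots:a_0)$ to $\phi(w)=\sigma_{n-1}^{a_{n-1}}\cdots\sigma_0^{a_0}$. Proposition \ref{Tmn} says the inversion table runs bijectively over the admissible range of digits, and the uniqueness of the factorization (\ref{flag0}) makes $\phi(w)$ a well-defined group element whose flag-major index is $\sum_{i=0}^{n-1}a_i=inv(w)$. Thus $\phi$ is a bijection satisfying $inv(w)=fmaj(\phi(w))$, which re-proves equidistribution with no generating-function computation at all.

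The only point requiring care---more a matter of bookkeeping than a real obstacle---is checking that the digit ranges match on the nose: reindexed to run over $i=0,\dots,n-1$, the inversion bound (\ref{newexp}) reads $0\le a_i\le m(i+1)-1$, which is precisely the constraint $0\le k_i\le mi+m-1$ governing the exponents in the factorization (\ref{flag0}) that defines $fmaj$. Because these bounds coincide, $\phi$ is genuinely well-defined and surjective, so the equidistribution---and therefore the fact that $fmaj$ is Mahonian relative to $L$---follows at once.
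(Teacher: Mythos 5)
Your proposal is correct and follows essentially the same route as the paper: the theorem is justified there by exactly the combination you describe, namely $L(w)=inv(w)$ from Theorem \ref{22}(2), the two generating-function evaluations $\sum_w q^{inv(w)}=\prod_{i=1}^n[im]_q=\sum_w q^{fmaj(w)}$, and the explicit bijection $\phi(w)=\sigma_{n-1}^{a_{n-1}}\cdots\sigma_0^{a_0}$ built from the inversion table. Your added check that the digit ranges $0\le a_i\le m(i+1)-1$ coincide with the exponent bounds in (\ref{flag0}) is exactly the point that makes $\phi$ well defined, and it is consistent with what the paper uses.
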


\section{Conclusion}
In this paper, we define a mixed-radix number system over the generalized symmetric group $G(m,1,n)$. In addition, we introduce a one-to-one correspondence between the positive integers of the set $\{1,...,m^{n}n!\}$ and the elements of the generalized symmetric group $G(m,1,n)$. Then we use this concept as an efficient tool to provide a new enumeration system for $G(m,1,n)$. With this integer representation, one can use the elements of this group more effectively in cryptography. In other words, the integer representation for $G(m,1,n)$ may allow to build robust cryptographic algorithms based on the structure of this group due to using of two special parameters (these are $m$ and $n$). For the applications in the cryptography of classical Weyl groups, see \cite{br5, br8}. It is a natural question to ask whether such integer representations also exist for the other complex reflection groups such as $G(m,m,n)$ and $G(m,p,n)$, where $p$ is prime such that $p|m$ and $G(m,m,n) \subset G(m,p,n)\subset G(m,1,n)$. As a matter of fact, $i$-inversion statistics of Weyl group $A_{n-1}$ and $B_n$ were studied in \cite{br6} and \cite{br11}, respectively. We introduce and study $i$-inversion concept for any element of $G(m,1,n)$. Then we use it as an efficient tool to create a new enumeration system for $G(m,1,n)$. 

It is a natural question to ask here when we consider two permutation statistics-inversion number and the flag-major index, then how can \textit{Haglund-Remmel-Wilson identity} (in the case of symmetric group, the proof was given by Remmel and Wilson in \cite{br9'}) be defined for $G(m,1,n)$?

\end{document}